\documentclass[12pt]{amsart}
\usepackage{amsfonts}
\usepackage{amsfonts,latexsym,rawfonts,amsmath,amssymb,amsthm}
\usepackage[plainpages=false]{hyperref}
\usepackage{graphicx}

\numberwithin{equation}{section}

\RequirePackage{color}
 \textwidth = 6.2 in
 \textheight = 8.25 in
 \oddsidemargin = 0.25 in
 \evensidemargin = 0.25 in
\voffset=-20pt
\theoremstyle{plain}

\newtheorem{theorem}{Theorem}[section]

\newtheorem{lemma}[theorem]{Lemma}
\newtheorem{proposition}[theorem]{Proposition}
\newtheorem{remark}[theorem]{Remark}

\newcommand{\beq}{\begin{equation}}
\newcommand{\eeq}{\end{equation}}
\newcommand{\beqs}{\begin{eqnarray*}}
\newcommand{\eeqs}{\end{eqnarray*}}
\newcommand{\beqn}{\begin{eqnarray}}
\newcommand{\eeqn}{\end{eqnarray}}
\newcommand{\beqa}{\begin{array}}
\newcommand{\eeqa}{\end{array}}

\def\phi{\varphi}

\begin{document}
\title[Prescribed Weingarten curvatures]{A class of  Hessian quotient equations  in the
warped product manifold}

\author{Xiaojuan Chen}
\address{Faculty of Mathematics and Statistics, Hubei Key Laboratory of Applied Mathematics, Hubei University,  Wuhan 430062, P.R. China}
\email{201911110410741@stu.hubu.edu.cn}

\author{Qiang Tu$^{\ast}$}
\address{Faculty of Mathematics and Statistics, Hubei Key Laboratory of Applied Mathematics, Hubei University,  Wuhan 430062, P.R. China}
\email{qiangtu@hubu.edu.cn}

\author{Ni Xiang}
\address{Faculty of Mathematics and Statistics, Hubei Key Laboratory of Applied Mathematics, Hubei University,  Wuhan 430062, P.R. China}
\email{nixiang@hubu.edu.cn}

\keywords{Weingarten curvature; warped product manifolds; Hessian quotient.}

\subjclass[2010]{Primary 53C45; Secondary 35J60.}

\thanks{This research was supported by funds from Hubei Provincial Department of Education
Key Projects D20181003, Natural Science Foundation of Hubei Province, China, No. 2020CFB246 and the National Natural Science Foundation of China No. 11971157.}
\thanks{$\ast$ Corresponding author}

\begin{abstract}
In this paper, we consider a class of Hessian quotient equations in the warped product manifold
$\overline{M}=I\times_{\lambda}M$. Under some sufficient conditions, we obtain an existence result for the star-shaped compact hypersurface $\Sigma$ in $\overline{M}$ using standard degree theory based on a priori estimates for solutions to the Hessian quotient equations.
\end{abstract}

\maketitle

\baselineskip18pt

\parskip3pt

 \section{Introduction}

In this paper, we consider the problem of prescribed Weingarten
curvatures for closed, star-shaped hypersurfaces in the warped product
manifold.
 Let $(M,g')$ be a compact Riemannian manifold and $I$ be an open interval in $\mathbb{R}$. The warped product manifold $\overline{M}=I\times_{\lambda}M$ is endowed with the metric
\begin{eqnarray}\label{metric}
\overline{g}^2=dr^2+\lambda^2(r) g',
\end{eqnarray}
where $\lambda:I\rightarrow\mathbb{R}^{+}$ is a positive $C^2$ differentiable function.
Let $\Sigma$ be a  compact star-shaped hypersurface in $\overline{M}$, thus $\Sigma$ can be parametrized as a radial graph over $M$. Specifically speaking,
there exists a differentiable function
$r : M \rightarrow I$ such that the  graph of $\Sigma$ can be represented by
\begin{equation*}
  \Sigma=\{(r(u),u)\mid u\in{M}\}.
\end{equation*}
We consider the following prescribed Weingarten curvature equation
\begin{eqnarray}\label{Eq}
\frac{\sigma_{k}}{\sigma_l}(\mu(\eta))=f(V, \nu(V)), \quad \forall~V  \in \overline{M},
\end{eqnarray}
 where $2\leq k\leq n, 0\leq l \leq k-2$, $V=\lambda\frac{\partial}{\partial r}$ is the position vector field of hypersurface $\Sigma$ in $\overline{M}$, $\sigma_k$ is the $k$-th elementary symmetric function, $\mu(\eta)$ is the eigenvalue of $g^{-1}\eta$, $f$ is a given smooth function and  $\nu(V)$ is the unit outer normal vector  at $V$. The   $(0,2)$-tensor $\eta$  on $\Sigma$ is defined by
$$\eta_{ij}= Hg_{ij}- h_{ij},$$
where $g_{ij}$ and $h_{ij}$ are the first and second fundamental forms of $\Sigma$ respectively, $H(V)$ is the mean curvature at $V \in \Sigma$. In fact, $\eta$ is the first Newton transformation  of $h$ with respect to $g$.
Given $r_1$, $r_2$ with $r_1<r_2$, we define the annulus domain $\{(r,u)\in \overline{M} \mid r_1\leq r \leq r_2\}$.
The main theorem is as follows.
\begin{theorem}\label{Main}
Let $M$ be a compact Riemannian manifold, $\overline{M}$ be the warped product manifold
with the metric (\ref{metric}) and $\Gamma$ be an open neighborhood of unit normal bundle of $M$ in $\overline{M}\times \mathbb{S}^n$.  Assume that $\lambda$ is a positive $C^2$ differentiable function and $\lambda'>0$. Suppose that $f$ satisfies\par
\begin{eqnarray}\label{ASS1}
f(V,\nu)>\frac{C_n^k}{C_n^l}((n-1)\zeta(r))^{k-l},\quad \quad \forall~ r\leq r_{1},
\end{eqnarray}
\begin{eqnarray}\label{ASS2}
f(V,\nu)<\frac{C_n^k}{C_n^l}((n-1)\zeta(r))^{k-l},\quad \quad \forall ~r \geq r_{2}
\end{eqnarray}
and
\begin{eqnarray}\label{ASS3}
\frac{\partial}{\partial r}(\lambda^{k-l}f(V,\nu))\leq 0, \quad \quad \forall~ r_{1}<r<r_{2},
\end{eqnarray}
where $V=\lambda\frac{\partial}{\partial r}$ and $\zeta(r)=\lambda'(r)/\lambda(r)$.
Then there exists a $C^{4, \alpha}$, $(\eta,k)$-convex, star-shaped and closed hypersurface $\Sigma$ in $\{(r,u)\in \overline{M}\mid r_{1}\leq r \leq r_{2}\}$ satisfies the equation \eqref{Eq} for any $\alpha\in (0,1)$.
\end{theorem}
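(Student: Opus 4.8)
The plan is to rewrite \eqref{Eq} as a single fully nonlinear, second order elliptic equation for the radial function $r:M\to I$, to establish uniform \emph{a priori} $C^{0}$, $C^{1}$ and $C^{2}$ estimates on the class of $(\eta,k)$-convex star-shaped solutions lying in the annulus, to bootstrap these to a uniform $C^{4,\alpha}$ bound by the Evans--Krylov and Schauder theories, and finally to produce a solution by a Leray--Schauder degree argument along a homotopy deforming $f$ to a model prescription whose unique solution is a coordinate slice. The geometric reduction is standard: using \eqref{metric} one writes the induced metric $g_{ij}$, the outward unit normal $\nu$, the second fundamental form $h_{ij}$ and the mean curvature $H$ in terms of $r$, its covariant derivatives with respect to $g'$, and $\lam$; this gives $\eta_{ij}=Hg_{ij}-h_{ij}$ and the eigenvalues $\mu(\eta)$ of $g^{-1}\eta$, and with $F:=(\sigma_k/\sigma_l)^{1/(k-l)}$ the equation becomes $F(\mu(\eta))=f^{1/(k-l)}$, which is elliptic and concave in $\mu$ on the admissible cone $\Gamma_k$; being $(\eta,k)$-convex means exactly $\mu(\eta)\in\Gamma_k$. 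A computation implicit in the hypotheses is that the slice $\{r=c\}$ is totally umbilic with all principal curvatures equal to $\zeta(c)=\lam'(c)/\lam(c)$, so $\eta$ has all eigenvalues $(n-1)\zeta(c)$ and $\sigma_k/\sigma_l(\mu(\eta))=\tfrac{C_n^k}{C_n^l}\big((n-1)\zeta(c)\big)^{k-l}$, which is precisely the quantity in \eqref{ASS1}--\eqref{ASS3}.

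For the $C^{0}$ estimate I would compare $\Sigma$ with slices at the extrema of $r$. At a maximum point $\nabla r=0$ and $\nabla^2 r\le 0$, hence $h_{ij}\ge\zeta(r_{\max})g_{ij}$, so $\mu(\eta)\ge (n-1)\zeta(r_{\max})(1,\dots,1)$ and, by monotonicity of $\sigma_k/\sigma_l$ on $\Gamma_k$, the left-hand side of \eqref{Eq} is $\ge \tfrac{C_n^k}{C_n^l}\big((n-1)\zeta(r_{\max})\big)^{k-l}$; if $r_{\max}\ge r_2$ this contradicts \eqref{ASS2}, so $r_{\max}<r_2$, and symmetrically comparison at a minimum point with \eqref{ASS1} gives $r_{\min}>r_1$ (here $\lam'>0$ guarantees $\zeta>0$, so the slices are genuine admissible barriers). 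The $C^{1}$ estimate is the gradient bound, equivalently a positive lower bound for the support function $\lan V,\nu\ran=\lam/\sqrt{1+\lam^{-2}|\nabla r|^2}$: differentiating \eqref{Eq} and applying the maximum principle to a suitable function of $|\nabla r|^2$ and $r$, one uses the structure hypothesis \eqref{ASS3}, $\p_r(\lam^{k-l}f)\le 0$, precisely to absorb the otherwise uncontrolled zeroth order term; with the $C^{0}$ bound this gives $|\nabla r|\le C$, so $\Sigma$ stays uniformly star-shaped in a fixed annulus.

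The heart of the matter is the $C^{2}$ estimate, i.e. a uniform upper bound for the principal curvatures of $\Sigma$; once this holds the equation is uniformly elliptic and a positive lower bound on the curvatures follows from \eqref{Eq} and the $C^{0}$--$C^{1}$ bounds. I would apply the maximum principle to the largest eigenvalue of the Weingarten map, regularized by an auxiliary factor built from the support function and $|\nabla r|^2$; differentiating \eqref{Eq} twice, the concavity of $F$ in $\mu$ disposes of the third order term, and the Gauss and Codazzi equations of $\overline{M}=I\times_{\lam}M$ together with the warped-product curvature identities control the commutators and the ambient curvature contributions, the remaining terms being absorbed by the first order bounds and \eqref{ASS1}--\eqref{ASS3}. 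This is where essentially all the difficulty lies: since $\eta=Hg-h$ is a Newton-transformed, hence nonlinear, function of $h$, the second-derivative identities are substantially more delicate than for the pure $\sigma_k(h)$ equation and the signs of the critical terms must be tracked carefully; this is presumably also the role of the restriction $l\le k-2$. With $\|r\|_{C^{2}}$ bounded and the operator uniformly elliptic and concave, Evans--Krylov yields a uniform $C^{2,\alpha}$ bound and then Schauder theory a uniform $C^{4,\alpha}$ bound on the closed set of $(\eta,k)$-convex star-shaped solutions in $\{r_1\le r\le r_2\}$.

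Finally I would close by degree theory. Pick $r_0\in(r_1,r_2)$ and a model right-hand side $f_0$ with $\tfrac{C_n^k}{C_n^l}\big((n-1)\zeta(r_0)\big)^{k-l}=f_0$, so that the slice $\{r=r_0\}$ solves the model problem and is nondegenerate (its linearization is an invertible self-adjoint elliptic operator on $M$), and join $f_0$ to $f_1=f$ by a family $f_t$ along which \eqref{ASS1}--\eqref{ASS3} remain valid. The uniform estimates above confine all $(\eta,k)$-convex star-shaped solutions of the $t$-problem to a fixed open subset of a H\"older space on which the Leray--Schauder degree of the associated map is defined and invariant in $t$; computing it at $t=0$ from the invertible linearization at the slice gives degree $\pm1$, hence a solution survives at $t=1$, and elliptic regularity produces the $C^{4,\alpha}$, $(\eta,k)$-convex, star-shaped, closed hypersurface $\Sigma$ asserted in the theorem.
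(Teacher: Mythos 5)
Your proposal follows essentially the same route as the paper: $C^0$ bounds by comparing with slices at the extrema of $r$ via \eqref{ASS1}--\eqref{ASS2}, a support-function lower bound for $C^1$ driven by \eqref{ASS3}, a maximum-principle $C^2$ estimate on $\ln\kappa_{\max}$ corrected by $\tau$ and $\Lambda$ using concavity of $G$ and the warped-product Gauss--Codazzi identities, then Evans--Krylov/Schauder and a degree-theory homotopy whose $t=0$ endpoint is a nondegenerate slice. One small imprecision: the principal curvatures of an $(\eta,k)$-convex hypersurface need not be positive, so the phrase ``a positive lower bound on the curvatures'' should read as a two-sided bound $|\kappa_i|\le C$, which follows from the upper bound on $\kappa_{\max}$ together with $H>0$; this does not affect the argument.
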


\begin{remark}
The key to prove Theorem \ref{Main} is to obtain the curvature estimate for
the Hessian quotient equation (\ref{Eq}) in the warped product manifold, which is established in Theorem \ref{n-2-C2e}.
\end{remark}

This kind of Hessian quotient equation is stimulated by many important geometric
problems. 
When $k=n$, $l=0$ and $\lambda(r)=r$, the equation \eqref{Eq} becomes the following equation for an $(\eta, n)$-convex hypersurface
\begin{eqnarray}\label{ht-Eq-8}
\mbox{det} (\eta(V))=f(V, \nu),
\end{eqnarray}
which is studied intensively by Sha \cite{Sha1, Sha2}, Wu \cite{Wu} and Harvey-Lawson \cite{HL2}.
When the left hand of \eqref{ht-Eq-8} is replaced by $\sigma_k(\eta(V))$, Chu-Jiao
established curvature estimates for this kind of equation in \cite{CJ}.
Inspired by above results, the authors in \cite{CTX} considered the corresponding Hessian quotient type
prescribed curvature equations in Euclidean space. In this paper, we generalize the existence results in \cite{CTX} to the warped product manifold for the prescribed curvature problem. The remarkable fact is that Theorem \ref{Main} recovers the existence results in \cite{CJ, CTX}.

 When $\mu(\eta)$ is replaced  by $\kappa(V)$ and $l=0$, the equation \eqref{Eq} becomes this kind of  prescribed curvature equation
\begin{eqnarray}\label{ht-Eq-9}
\sigma_k(\kappa(V))=f(V, \nu),
\end{eqnarray}
which has been widely studied in the past two decades.
The key to this prescribed curvature equation is the curvature estimate.
 In Euclidean space, Caffarelli-Nirenberg-Spruck established the curvature estimates for $k=n$ in \cite{Ca1}.
Guan-Ren-Wang proved the $C^2$ estimates of the equation  \eqref{ht-Eq-9} for $k=2$ in \cite{Guan-Ren15}.  Spruck-Xiao extended the $2$-convex case to space forms and gave a simple proof for the Euclidean case in \cite{Sp}. Ren-Wang proved the $C^2$ estimates for $k=n-1$ and $n-2$ in \cite{Ren, Ren1}. When $2<k<n$, the $C^2$ estimates for the equation of prescribing curvature
measures were also proved in \cite{Guan12, Guan09}, where $f(V, \nu) = \langle V, \nu \rangle \Tilde{f}(V)$.
Ivochkina considered the Dirichlet problem of the equation \eqref{ht-Eq-9} on domains in $\mathbb{R}^n$ and obtained the $C^2$ estimates according to the dependence of $f$ on $\nu$ under some extra conditions in \cite{Iv1,Iv2}.
 Caffarelli-Nirenberg-Spruck \cite{Ca} and Guan-Guan \cite{Guan02} proved the $C^2$ estimates when $f$ was independent of $\nu$ and depended only on $\nu$ respectively. Moreover, some results have been derived by Li-Oliker \cite{Li-Ol} on unit sphere, Barbosa-de Lira-Oliker \cite{Ba-Li} on space forms, Jin-Li \cite{Jin} on hyperbolic space and  Andrade-Barbosa-de Lira \cite{Al} on the warped product manifold. In particular, Chen-Li-Wang \cite{CLW} generalized the results in \cite{Guan-Ren15} and Ren-Wang \cite{Ren} extended to the $(n-2)$-convex hypersurface in the warped product manifold.

The organization of the paper is as follows.
In Sect. 2
we start with some preliminaries.
The $C^0$, $C^1$ and $C^2$ estimates are given in Sect. 3.
In Sect. 4 we finish the proof of Theorem \ref{Main}.

\section{Preliminaries}

\subsection{Star-shaped hypersurfaces in the warped product
manifold}

Let $M$ be a compact Riemannian manifold with the metric $g'$ and $I$ be an open interval in $\mathbb{R}$. Assume that $\lambda : I\rightarrow \mathbb{R}^{+}$ is a positive differential function and
$\lambda'>0$. Clearly,
\begin{eqnarray*}
\lambda(r)=\begin{cases}r \quad \quad \mbox{on}~[0,\infty)\\
\sin r\quad \mbox{on}~[0,\frac{\pi}{2})\\
\sinh r \quad \mbox{on}~ [0,\infty)
\end{cases} \Longrightarrow \overline{M}= \begin{cases}\mathbb{R}^{n+1}\\
\mathbb{S}^{n+1}\\
\mathbb{H}^{n+1}.
\end{cases}
\end{eqnarray*}
 The manifold $\overline{M}=I\times_{\lambda}M$ is called the warped product if it is endowed with the metric
\begin{eqnarray*}
\overline{g}^{2}=dr^2+\lambda^2(r) g'.
\end{eqnarray*}
The metric in $\overline{M}$ is denoted by $\langle\cdot,\cdot\rangle$. The corresponding Riemannian connection in $\overline{M}$ will be denoted by $\overline{\nabla}$. The usual connection in $M$ will be denoted by $\nabla'$. The curvature tensors in $M$ and $\overline{M}$ will be denoted by $R$ and $\overline{R}$ respectively.

Let $\{e_1,\cdots,e_{n-1}\}$ be an orthonormal frame field in M and let $\{\theta_1,\cdots,\theta_{n-1}\}$ be the associated dual frame.
The connection forms $\theta_{ij}$ and curvature forms $\Theta_{ij}$ in M satisfy the structural equations
  \begin{align*}
    &d\theta_i=\sum_j\theta_{ij}\wedge\theta_j,\quad \theta_{ij}=-\theta_{ji} ,\\
    &d\theta_{ij}-\sum_k\theta_{ik}\wedge\theta_{kj}=\Theta_{ij}=-\frac{1}{2}\sum_{k,l}R_{ijkl}\theta_k
    \wedge\theta_l.
  \end{align*}
An orthonormal frame in $\overline{M}$ may be defined by $\overline{e}_i=\frac{1}{\lambda}e_i,1\leq i\leq n-1$ and $\overline{e}_0=\frac{\partial}{\partial r}$. The associated dual frame is that $\overline{\theta}_i=\lambda\theta_i$ , $1\leq i \leq n-1$ and $\overline{\theta}_0=dr$.
Then, we have the following lemma (See \cite{HLW}).

\begin{lemma}
Given a differentiable function $r : M \rightarrow I$, its graph is defined by the hypersurface
\begin{eqnarray*}
\Sigma=\{(r(u),u): u \in M\}.
\end{eqnarray*}
Then, the tangential vector takes the
form
$$V_i=\lambda\overline{e}_i+r_i \overline{e}_0,$$
where $r_i$ are the components of the differential $dr=r_i \theta^i$.
The induced metric on $\Sigma$ has
$$g_{ij}=\lambda^2(r)\delta_{ij}+r_i r_j,$$
and its inverse is given by
$$g^{ij}=\frac{1}{\lambda^2}(\delta_{ij}-\frac{r^i r^j}{v^2}).$$
We also have the outward unit normal vector of $\Sigma$
$$\nu=\frac{1}{v}\bigg(\lambda \overline{e}_0 -r^i\overline{e}_i\bigg),$$
where $v=\sqrt{\lambda^2+|\nabla'r|^2}$ with $\nabla'r=r^ie_i$.
Let $h_{ij}$ be the second fundamental form of $\Sigma$ in term of the tangential vector fields $\{X_1,
..., X_n\}$. Then,
$$h_{ij}=-\langle\overline{\nabla}_{X_j} X_i, \nu\rangle=
\frac{1}{v}\bigg(-\lambda r_{ij}+2\lambda'r_ir_j+\lambda^2\lambda' \delta_{ij}\bigg)$$
and
$$h^i_j=\frac{1}{\lambda^2 v}(\delta_{ik}-\frac{r^i r^k}{v^2})\bigg(-\lambda r_{kj}+2\lambda'r_kr_j+\lambda^2\lambda' \delta_{kj}\bigg),$$
where $r_{ij}$ are the components of the Hessian $\nabla'^{2} r=\nabla' dr $ of $r$ in $M$.
\end{lemma}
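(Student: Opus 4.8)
The plan is to obtain every formula in the lemma by a direct computation for the radial graph $\Phi(u)=(r(u),u)$, carried out in the ambient orthonormal frame $\{\overline{e}_0=\partial_r,\ \overline{e}_i=\lambda^{-1}e_i\}$ with dual coframe $\{\overline{\theta}_0=dr,\ \overline{\theta}_i=\lambda\theta_i\}$, in the order the lemma lists them: first the tangent frame $V_i$, then $g_{ij}$ and its inverse, then the outward unit normal $\nu$, and finally the second fundamental form $h_{ij}$ (from which $h^i_j=g^{ik}h_{kj}$ is immediate).

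\emph{Steps 1--3 (tangent frame, induced metric, normal).} Differentiating $\Phi$ in the direction $e_j$ and using $e_j=\lambda\overline{e}_j$ together with $dr(\overline{e}_0)=1$, $dr(\overline{e}_i)=0$, one gets $V_j=\lambda\overline{e}_j+r_j\overline{e}_0$. Orthonormality of $\{\overline{e}_0,\overline{e}_i\}$ then yields $g_{ij}=\langle V_i,V_j\rangle=\lambda^2\delta_{ij}+r_ir_j$. For the inverse I would simply verify that the asserted $g^{ij}=\lambda^{-2}\bigl(\delta_{ij}-r^ir^j/v^2\bigr)$ satisfies $g^{ik}g_{kj}=\delta^i_j$; this is a one-line rank-one (Sherman--Morrison) check using $v^2=\lambda^2+|\nabla'r|^2$. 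For $\nu$, I would impose $\langle\nu,V_j\rangle=0$ for all $j$, $|\nu|=1$, and that $\nu$ point toward increasing $r$; the orthogonality relations force $\nu$ proportional to $\lambda\overline{e}_0-r^i\overline{e}_i$, normalization gives $\nu=v^{-1}(\lambda\overline{e}_0-r^i\overline{e}_i)$, and its $\overline{e}_0$-component $\lambda/v>0$ confirms this is the outward normal.

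\emph{Step 4 (second fundamental form).} This is where the work is. I would first record the Levi-Civita connection of $\overline{M}$ in the chosen frame, reading it off from the structure equations set up above (equivalently from the O'Neill warped-product formulas):
\[
\overline{\nabla}_{\overline{e}_0}\overline{e}_0=0,\quad \overline{\nabla}_{\overline{e}_0}\overline{e}_i=0,\quad \overline{\nabla}_{\overline{e}_i}\overline{e}_0=\frac{\lambda'}{\lambda}\,\overline{e}_i,\quad \overline{\nabla}_{\overline{e}_i}\overline{e}_j=\frac{1}{\lambda^2}\nabla'_{e_i}e_j-\frac{\lambda'}{\lambda}\delta_{ij}\,\overline{e}_0.
\]
Then I would expand $\overline{\nabla}_{V_j}V_i=\overline{\nabla}_{V_j}(\lambda\overline{e}_i)+\overline{\nabla}_{V_j}(r_i\overline{e}_0)$ by the Leibniz rule, using $V_j\lambda=\lambda'\,dr(V_j)=\lambda'r_j$, $\overline{\nabla}_{V_j}\overline{e}_i=\lambda\overline{\nabla}_{\overline{e}_j}\overline{e}_i$, $\overline{\nabla}_{V_j}\overline{e}_0=\lambda'\overline{e}_j$, and the fact that $r_i$ does not depend on the $r$-coordinate, so $V_jr_i=e_j(r_i)=r_{ij}+\sum_k\theta_{ik}(e_j)r_k$ (the passage from the frame derivative $e_j(r_i)$ to the covariant Hessian $r_{ij}=(\nabla'dr)_{ij}$). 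Pairing the resulting expression with $\nu=v^{-1}(\lambda\overline{e}_0-r^k\overline{e}_k)$, the $\nabla'_{e_j}e_i$ contribution and the $\sum_k\theta_{ik}(e_j)r_k$ contribution cancel identically, leaving $\langle\overline{\nabla}_{V_j}V_i,\nu\rangle=v^{-1}(\lambda r_{ij}-2\lambda'r_ir_j-\lambda^2\lambda'\delta_{ij})$; hence $h_{ij}=-\langle\overline{\nabla}_{V_j}V_i,\nu\rangle$ takes the stated form, and contracting with $g^{ik}$ gives the displayed expression for $h^i_j$.

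\emph{Main obstacle.} There is no conceptual difficulty, only bookkeeping: the one point requiring care is that $r_i$ and $r_{ij}$ denote covariant derivatives of $r$ on $(M,g')$, so rewriting the plain derivative $e_j(r_i)$ in terms of the Hessian $r_{ij}$ generates the connection coefficients of $\nabla'$, and one must check (using symmetry of the Hessian) that these are exactly absorbed by the tangential part $\lambda^{-2}\nabla'_{e_j}e_i$ of $\overline{\nabla}_{\overline{e}_j}\overline{e}_i$ after contraction with the $\overline{e}_k$-part of $\nu$. Once this cancellation is verified, all five identities of the lemma follow; as a sanity check one may take $M=\mathbb{S}^{n-1}$, $\lambda(r)=r$, where a geodesic sphere $r\equiv R$ gives $h_{ij}=R\delta_{ij}$ and $h^i_j=R^{-1}\delta^i_j$, the correct principal curvatures of a round sphere in $\mathbb{R}^n$.
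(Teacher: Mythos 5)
The paper offers no proof of this lemma; it simply states the result with the remark ``(See \cite{HLW})'', so there is no in-paper argument to compare against. Your direct computation is correct and is the standard way to establish these formulas: the tangent frame, induced metric, inverse, and normal follow from orthonormality of $\{\overline{e}_0,\overline{e}_i\}$ exactly as you say, the warped-product connection formulas you record are the O'Neill formulas rewritten in the scaled frame, and the decisive step --- that the $\sum_k\theta_{ik}(e_j)r_k$ produced when passing from $e_j(r_i)$ to the covariant Hessian $r_{ij}$ is exactly cancelled, after pairing with $\nu$, by the $\nabla'_{e_j}e_i$ part of $\overline{\nabla}_{\overline{e}_j}\overline{e}_i$ --- is correctly identified and does go through. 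One cosmetic remark: in the paper's conventions $M$ and $\Sigma$ are $n$-dimensional and $\overline{M}$ is $(n+1)$-dimensional, so the sanity check should read $M=\mathbb S^{n}$, $\overline{M}=\mathbb R^{n+1}$, $h^i_j=R^{-1}\delta^i_j$; this does not affect the argument.
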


Let $\Gamma_k$ be the connected component of $\{\kappa\in \mathbb{R}^n \mid \sigma_m>0, m=1,\cdots, k\}$, the operator $\sigma_k(\kappa)$ for $\kappa=(\kappa_1, \cdots, \kappa_n)\in \Gamma_k$ is defined by
$$\sigma_k(\kappa)=\sum_{1\leq i_1<\cdots<i_k\leq n} \kappa_{i_1} \kappa_{i_2}\cdots \kappa_{i_k}.$$
A smooth hypersurface $M \subset \mathbb{R}^{n+1}$ is called
$(\eta, k)$-convex if $\mu(\eta) \in \Gamma_k$ for any $V\in M$,
 where $\Gamma_k$ is the Garding cone
\begin{eqnarray*}\label{cone}
\Gamma_{k}=\{\lambda \in \mathbb{R} ^n: \sigma_{j}(\mu)>0, \forall ~ 1\leq j \leq k\}.
\end{eqnarray*}

For convenience, we introduce the following notations:
\begin{eqnarray*}
G(\eta):= \left(\frac{\sigma_k(\eta)}{\sigma_l(\eta)}\right)^{\frac{1}{k-l}},\quad G^{ij}:=\frac{\partial G}{ \partial \eta_{ij}}, \quad
G^{ij, rs}:= \frac{\partial^2 G}{\partial \eta_{ij} \partial \eta_{rs}}, \quad
 F^{ii}:=\sum_{k\neq i} G^{kk}.
\end{eqnarray*}
Thus,
$$G^{ii}= \frac{1}{k-l} \left(\frac{\sigma_k(\eta)}{\sigma_l(\eta)}\right)^{\frac{1}{k-l}-1} \frac{\sigma_{k-1}(\eta| i)\sigma_l(\eta)-\sigma_k(\eta)\sigma_{l-1}(\eta| i)}{\sigma_l^2(\eta)}.$$
If $\eta=\mbox{diag}(\mu_1, \mu_2, \cdots, \mu_n)$ and $\mu_1 \leq \mu_2\leq \cdots \leq \mu_n$, then we have
$$G^{11}\geq G^{22} \geq \cdots \geq G^{nn}, \quad F^{11} \leq F^{22} \leq \cdots \leq F^{nn}.$$
Note that $\sum_iF^{ii} = (n-1) \sum_iG^{ii} \geq (n-1) \left(\frac{C_n^k}{C_n^l}\right)^{\frac{1}{k-l}}$ and
\begin{equation}\label{Fi}
  F^{ii} \geq F^{22} \geq \frac{1}{n(n-1)} \sum_i F^{ii}.
\end{equation}

To handle the ellipticity of the equation \eqref{Eq}, we need the
following important propositions and their proof are the same as
Proposition 2.2.3 in \cite{CQ1}.
\begin{proposition}\label{th-lem-07}
Let $\eta$ be a diagonal matrix with $\mu(\eta)\in \Gamma_k$, $0\leq l \leq k-2$ and $k\geq 3$. Then
\begin{eqnarray*}
-G^{1i, i1}(\eta)=\frac{G^{11}-G^{ii}}{\eta_{ii}-\eta_{11}}, \quad \forall~ i\geq 2.
\end{eqnarray*}
\end{proposition}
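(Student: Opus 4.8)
The plan is to recognise this as the classical formula for the second-order derivatives of an $O(n)$-invariant function on symmetric matrices, evaluated at a diagonal point, and to derive it from the rotational invariance of $G$. Since $\sigma_l>0$ throughout $\Gamma_k$ and $k-l\geq 2$, the function $G(\eta)=\bigl(\sigma_k(\mu(\eta))/\sigma_l(\mu(\eta))\bigr)^{1/(k-l)}$ is smooth in a neighbourhood of any symmetric $\eta$ with $\mu(\eta)\in\Gamma_k$ and depends on $\eta$ only through its eigenvalues; in particular $G^{ij}$ and $G^{ij,rs}$ are well defined at $\eta$. We may take $\eta=\mathrm{diag}(\mu_1,\dots,\mu_n)$, fix $i\geq2$, and first assume $\mu_1\neq\mu_i$; the case $\mu_i=\mu_1$ is dealt with afterwards.

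For $\theta\in\mathbb{R}$ let $O_\theta\in SO(n)$ be the rotation by $\theta$ in the $(1,i)$-coordinate plane and the identity on its orthogonal complement, and set $\eta(\theta)=O_\theta^{T}\,\eta\,O_\theta$. By invariance of $G$ under orthogonal conjugation, $G(\eta(\theta))\equiv G(\eta)$. A short computation shows that the only $\theta$-dependent entries of $\eta(\theta)$ are $\eta(\theta)_{11}$, $\eta(\theta)_{ii}$, and $\eta(\theta)_{1i}=\eta(\theta)_{i1}=(\mu_i-\mu_1)\sin\theta\cos\theta$, so that at $\theta=0$ the only nonvanishing derivatives are $\dot\eta_{1i}=\dot\eta_{i1}=\mu_i-\mu_1$ and $\ddot\eta_{11}=-\ddot\eta_{ii}=2(\mu_i-\mu_1)$. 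Differentiating $G(\eta(\theta))\equiv G(\eta)$ once at $\theta=0$ gives $2(\mu_i-\mu_1)G^{1i}=0$, i.e.\ the vanishing $G^{1i}=0$ of off-diagonal first derivatives at a diagonal matrix. Differentiating a second time and substituting the data above, together with $G^{1i}=0$, produces the relation
\[
  (\mu_i-\mu_1)^2\bigl(G^{1i,1i}+G^{1i,i1}+G^{i1,1i}+G^{i1,i1}\bigr)+2(\mu_i-\mu_1)\bigl(G^{11}-G^{ii}\bigr)=0 ,
\]
everything evaluated at $\eta$. For an isotropic $G$ one has, at a diagonal matrix, $G^{1i,1i}=G^{i1,i1}=0$ (readily checked, e.g.\ from the fact that $\partial G/\partial\eta_{1i}$ depends on $\eta$ only through its transpose-side off-diagonal entries) and $G^{1i,i1}=G^{i1,1i}$, so the first bracket equals $2\,G^{1i,i1}$; cancelling the nonzero factor $2(\mu_i-\mu_1)$ yields exactly
\[
  -G^{1i,i1}(\eta)=\frac{G^{11}-G^{ii}}{\eta_{ii}-\eta_{11}} .
\]

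For the degenerate case $\eta_{ii}=\eta_{11}$ I would perturb $\eta$ within $\Gamma_k$ so as to split these two eigenvalues, apply the identity just obtained, and pass to the limit; both sides are continuous in $\eta$ (the right-hand side converging, by l'H\^opital, to $G^{11,11}-G^{11,ii}$), so the identity persists. I expect the only real difficulty to be bookkeeping rather than ideas — one must keep a single consistent convention for $G^{ij}$ and $G^{ij,rs}$ (the one used for Proposition~2.2.3 in \cite{CQ1}, for which the concavity inequality $\sum G^{ij,rs}\xi_{ij}\xi_{rs}\leq0$ holds with no extra constant) so that the numerical factors come out as stated. The hypotheses $\mu(\eta)\in\Gamma_k$, $0\leq l\leq k-2$, $k\geq3$ enter only to ensure $G$ is well defined and smooth near $\eta$. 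An equivalent, and arguably cleaner, route — the one taken in \cite{CQ1} — is to expand the eigenvalues by second-order perturbation theory, $\mu_p(\eta+B)=\mu_p+B_{pp}+\sum_{q\neq p}B_{pq}^2/(\mu_p-\mu_q)+O(|B|^3)$, substitute into $G(\eta+B)=g(\mu(\eta+B))$ with $g(\mu)=(\sigma_k(\mu)/\sigma_l(\mu))^{1/(k-l)}$, and read off the coefficient of the relevant quadratic monomial.
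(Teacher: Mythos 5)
Your rotation-invariance argument is correct, and the formula it produces is exactly the statement. The paper itself does not prove the proposition; it defers to Proposition~2.2.3 of \cite{CQ1}, which (as you note) proceeds by expanding the eigenvalues of $\eta+B$ to second order and reading the coefficient of $B_{1i}^2$ in $G(\eta+B)=g(\mu(\eta+B))$. Both are standard derivations of the same classical identity; yours trades the perturbative eigenvalue expansion for the conserved quantity $G(O_\theta^T\eta O_\theta)\equiv G(\eta)$ and is a touch cleaner to set up, while the perturbation route gives the full second-derivative tensor $G^{ij,kl}$ at a stroke rather than just the $(1i,i1)$ component. One step you pass over a bit quickly is the claim $G^{1i,1i}=G^{i1,i1}=0$: the rotation only pins down the sum $G^{1i,1i}+G^{1i,i1}+G^{i1,1i}+G^{i1,i1}$, so you need this extra input to isolate $G^{1i,i1}$. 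It is true, but your parenthetical justification is hand-wavy; the clean argument is that $\sigma_m$ (as a polynomial in all $n^2$ entries via principal minors) is multilinear in the entries, so $\partial^2\sigma_m/\partial\eta_{1i}^2\equiv 0$, and at a diagonal point $\partial\sigma_m/\partial\eta_{1i}=0$ as well (every contributing cofactor carries a factor $\eta_{i1}=0$); combining these for $G=(\sigma_k/\sigma_l)^{1/(k-l)}$ gives $G^{1i,1i}=0$ at diagonal $\eta$. With that filled in, and your limiting argument for $\eta_{11}=\eta_{ii}$, the proof is complete.
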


\begin{proposition}\label{ellipticconcave}
Let $M$ be a smooth $(\eta, k)$-convex closed hypersurface in $\mathbb{R}^{n+1}$
and $0\leq l< k-1$. Then the operator
\begin{eqnarray*}
G(\eta_{ij}(V))=\left(\frac{\sigma_k(\mu(\eta))}{\sigma_{l}(\mu(\eta))}\right)^{\frac{1}{k-l}}
\end{eqnarray*}
is elliptic and concave with respect to $\eta_{ij}(V)$. Moreover we have
\begin{eqnarray*}
\sum G^{ii} \geq \left(\frac{C_n^k}{C_n^l}\right)^{\frac{1}{k-l}}.
\end{eqnarray*}
\end{proposition}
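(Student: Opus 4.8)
The plan is to push everything down to the level of the eigenvalues $\mu(\eta)\in\Gamma_k$ and to invoke the classical properties of the symmetric function $f(\mu):=\bigl(\sigma_k(\mu)/\sigma_l(\mu)\bigr)^{1/(k-l)}$ on the open convex cone $\Gamma_k$. Fix $V$ and pick a local orthonormal frame so that $g_{ij}=\delta_{ij}$ and $(\eta_{ij})$ is diagonal with entries $\mu_1\le\mu_2\le\cdots\le\mu_n$; then $G(\eta)=f\bigl(\mu(\eta)\bigr)$ and the $(\eta,k)$-convexity assumption says precisely that $\mu(\eta)\in\Gamma_k$. On $\Gamma_k$ the Newton--MacLaurin inequalities give $\sigma_{k-1}(\mu| i)\sigma_l(\mu)-\sigma_k(\mu)\sigma_{l-1}(\mu| i)>0$ for each $i$, so every diagonal entry $G^{ii}$ in the formula displayed just before the proposition is strictly positive; since $\eta$ is diagonal this means $G^{ij}=\partial G/\partial\eta_{ij}$ is positive definite, i.e.\ $G$ is elliptic.

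For concavity I would use the standard splitting of the second variation of a symmetric function of a symmetric matrix at a diagonal point: for a symmetric variation $\xi$,
\begin{equation*}
\frac{d^2}{dt^2}\Big|_{t=0}G(\eta+t\xi)=\sum_{i,j}\frac{\partial^2 f}{\partial\mu_i\partial\mu_j}\,\xi_{ii}\xi_{jj}+\sum_{i\neq j}G^{ij,ji}\,\xi_{ij}^2,
\end{equation*}
where, by Proposition \ref{th-lem-07}, $G^{ij,ji}=\dfrac{G^{ii}-G^{jj}}{\eta_{ii}-\eta_{jj}}$ (read as a limit when $\mu_i=\mu_j$). The first sum is $\le 0$ because $f=(\sigma_k/\sigma_l)^{1/(k-l)}$ is concave on $\Gamma_k$, a classical fact for $0\le l<k$ (see Caffarelli--Nirenberg--Spruck \cite{Ca}, and \cite[Proposition~2.2.3]{CQ1}); the second sum is $\le 0$ because $\mu_i\le\mu_j$ forces $G^{ii}\ge G^{jj}$, i.e.\ $G^{11}\ge G^{22}\ge\cdots\ge G^{nn}$ when $\mu_1\le\cdots\le\mu_n$, which is itself extracted from the Newton--MacLaurin inequalities and is recorded right above the proposition. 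Hence the second variation is nonpositive at every diagonal $\eta$ with $\mu(\eta)\in\Gamma_k$, and therefore at every such $\eta$, giving concavity of $G$ in $\eta_{ij}$.

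For the lower bound on $\sum_i G^{ii}$ I would use that $f$ is homogeneous of degree one, so Euler's identity gives $\sum_i\mu_i f_i(\mu)=f(\mu)$ with $f_i=\partial f/\partial\mu_i$, and $f_i(\mu)=G^{ii}$ at the diagonal point above; also $\mathbf 1:=(1,\dots,1)\in\Gamma_n\subseteq\Gamma_k$. Applying concavity of $f$ between $\mu$ and $\mathbf 1$,
\begin{equation*}
\Bigl(\tfrac{C_n^k}{C_n^l}\Bigr)^{\frac{1}{k-l}}=f(\mathbf 1)\le f(\mu)+\sum_i f_i(\mu)(1-\mu_i)=f(\mu)+\sum_i G^{ii}-f(\mu)=\sum_i G^{ii}.
\end{equation*}
The only genuinely substantial inputs are the concavity of the quotient $(\sigma_k/\sigma_l)^{1/(k-l)}$ on $\Gamma_k$ and the ordering $G^{11}\ge\cdots\ge G^{nn}$; both are standard (the argument being the same as \cite[Proposition~2.2.3]{CQ1}), so the write-up mainly amounts to citing these and assembling the three steps. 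The point I expect to require the most care is the off-diagonal sign in the second-variation formula via Proposition \ref{th-lem-07}, since that is precisely where the cone structure and the monotonicity of the $G^{ii}$ enter.
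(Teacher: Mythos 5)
Your proof is correct and follows the standard route — ellipticity from positivity of the $G^{ii}$ via Newton--MacLaurin, concavity from the diagonal/off-diagonal splitting of the second variation combined with Proposition~\ref{th-lem-07} and the monotonicity $G^{11}\ge\cdots\ge G^{nn}$, and the lower bound from Euler's identity plus concavity against the point $\mathbf 1$. This is exactly the argument the paper delegates to \cite[Proposition~2.2.3]{CQ1}, so the approach is essentially the same.
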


The Codazzi equation is a commutation formula for the first order derivative of $h_{ij}$ given by
\begin{equation*}
  h_{ijk}-h_{ikj}=\overline{R}_{0ijk}
\end{equation*}
and the Ricci identity is a commutation formula for the second order derivative of $h_{ij}$ given by \cite[Lemma 2.2]{CLW}, the following lemma can be derived.
\begin{lemma}
   Let $\overline{X}$ be a point of $\Sigma$ and $\{E_{0}=\nu,E_1,\cdots,E_n\}$ be an adapted frame field such that each $E_i$ is a principal direction and  $\omega_i^k=0$ at $\overline{X}$. Let $(h_{ij})$ be the second quadratic form of $\Sigma$. Then, at the point $\overline{X}$, we have
   \begin{equation}\label{hii}
     h_{ii11}-h_{11ii}=h_{11}h_{ii}^2-h_{11}^2h_{ii}+2(h_{ii}-h_{11})\overline{R}_{i1i1}+h_{11}\overline{R}
     _{i0i0}-h_{ii}\overline{R}_{1010}+\overline{R}_{i1i0;1}-\overline{R}_{1i10;i}.
   \end{equation}
\end{lemma}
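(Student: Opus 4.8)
The plan is to derive \eqref{hii} by differentiating the Codazzi equation a second time and then commuting the resulting third‑order covariant derivatives of $h$ with the help of the Ricci identity, finally specializing everything to the adapted principal frame at $\overline{X}$. First I would apply Codazzi in the form $h_{ii;1}=h_{i1;i}+\overline{R}_{0ii1}$ and differentiate it along $E_1$ to obtain $h_{ii;11}=h_{i1;i1}+\nabla_1\overline{R}_{0ii1}$; then I would apply Codazzi again, in the form $h_{i1;1}=h_{11;i}+\overline{R}_{01i1}$, and differentiate along $E_i$ to produce the term $h_{11;ii}=\nabla_i(h_{i1;1})-\nabla_i\overline{R}_{01i1}$. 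To connect $h_{i1;i1}$ with $\nabla_i(h_{i1;1})=h_{i1;1i}$ one interchanges the order of the two covariant derivatives, and this is precisely where the Ricci identity — the commutation formula for the second covariant derivatives of $h_{ij}$ recorded in \cite[Lemma 2.2]{CLW} — enters: $h_{i1;i1}-h_{i1;1i}$ equals a contraction of the intrinsic curvature tensor $R$ of $\Sigma$ with $h$, and by the Gauss equation $R_{abcd}=\overline{R}_{abcd}+h_{ac}h_{bd}-h_{ad}h_{bc}$ this splits into an ambient‑curvature piece and a piece cubic in $h$.

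Next I would exploit the choice of frame. Because $E_1,\dots,E_n$ are principal directions, $h_{ij}=\kappa_i\delta_{ij}$ at $\overline{X}$, so every contraction $\sum_m h_{ma}\overline{R}_{bmcd}$ collapses to a single term; this turns the output of the previous step into the cubic expression $h_{11}h_{ii}^2-h_{11}^2h_{ii}$ together with a multiple of $\overline{R}_{i1i1}$, while the hypothesis $\omega_i^k=0$ at $\overline{X}$ kills all connection‑coefficient contributions. One point that requires care is that $\nabla_1\overline{R}_{0ii1}$ and $\nabla_i\overline{R}_{01i1}$ are \emph{intrinsic} derivatives of the restriction to $\Sigma$ of $\overline{R}$ evaluated on the adapted frame; to rewrite them in terms of the ambient covariant derivatives $\overline{R}_{i1i0;1}$ and $\overline{R}_{1i10;i}$ one must differentiate through the frame vectors, using $\overline{\nabla}_1E_0=\overline{\nabla}_1\nu$ (the Weingarten map, which contributes a further $h\cdot\overline{R}$ term) and the normal components of $\overline{\nabla}_1E_1$ and $\overline{\nabla}_iE_i$ (which contribute $h_{11}\overline{R}_{i0i0}$ and $-h_{ii}\overline{R}_{1010}$), the tangential parts of $\overline{\nabla}_1E_i$ and so on vanishing at $\overline{X}$ by $\omega_i^k=0$. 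Collecting the three sources — the cubic‑in‑$h$ terms, all the $h\cdot\overline{R}$ terms (those coming from the Gauss equation together with the one coming from the Weingarten map combine into the coefficient $2(h_{ii}-h_{11})$ in front of $\overline{R}_{i1i1}$), and the ambient‑derivative terms $\overline{R}_{i1i0;1}-\overline{R}_{1i10;i}$ — yields \eqref{hii}.

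The only genuine difficulty here is bookkeeping rather than ideas: one must be consistent about the several sign conventions involved (the Codazzi equation, the Ricci identity, the Weingarten map, and index symmetries such as $\overline{R}_{1ii1}=-\overline{R}_{i1i1}$ and $\overline{R}_{0ii0}=-\overline{R}_{i0i0}$) so that the various contributions proportional to $\overline{R}_{i1i1}$ reinforce rather than cancel, and so that the ambient‑derivative terms emerge with the stated sign. There is no conceptual obstacle — this is a computation of Simons type — and the quickest way to implement it is to invoke the general commutation formula of \cite[Lemma 2.2]{CLW} directly and then restrict it to the adapted principal frame at $\overline{X}$, where it collapses at once to \eqref{hii}.
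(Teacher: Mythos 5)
Your proposal is correct and takes essentially the same route the paper does: the paper proves nothing directly but states that the identity follows from the Codazzi equation and the Ricci commutation formula of \cite[Lemma 2.2]{CLW}, specialized to an adapted principal frame — which is exactly your closing suggestion, and your more detailed sketch (differentiate Codazzi, commute with the Ricci identity, decompose the intrinsic curvature via Gauss, absorb the frame derivatives of $\overline{R}$ using Weingarten and $\omega_i^k=0$) is the standard Simons-type computation that fills in that reference.
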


Consider the function
\begin{eqnarray*}
\tau=\langle V, \nu\rangle, \quad \quad  \Lambda(r)=\int_{0}^{r}\lambda(s) d s
\end{eqnarray*}
with the position vector field
\begin{eqnarray*}
V=\lambda(r)\frac{\partial}{\partial_r}.
\end{eqnarray*}
 Then, we need the following lemma for $\tau$ and  $\Lambda$.
\begin{lemma}\label{supp}
Let $\tau$, $\Lambda$ be functions as above, then we have
\begin{eqnarray}\label{1d-lad}
\nabla_{E_i} \Lambda =\lambda \langle \overline{e}_0, E_i\rangle E_i,
\end{eqnarray}
\begin{eqnarray}\label{1d-tau}
\nabla_{E_i} \tau = \sum_j \left(\nabla_{E_j}\Lambda\right) h_{ij},
\end{eqnarray}
\begin{eqnarray}\label{2d-lad}
\nabla^2_{E_i, E_j} \Lambda=\lambda^{\prime}g_{ij}-\tau h_{ij}
\end{eqnarray}
and
\begin{eqnarray}\label{2d-tau}
\nabla^2_{E_i, E_j} \tau=-\tau\sum_k h_{ik}h_{kj}+ \lambda^{\prime}h_{ij}+\sum_k \left( h_{ijk}-\overline{R}_{0ijk}\right) \nabla_{E_k} \Lambda.
\end{eqnarray}
\end{lemma}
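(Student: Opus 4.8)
The plan is to derive all four identities from a single structural fact about the warped product: the function $\Lambda$ is a potential for the position field, and its ambient Hessian is pure trace. First I would note that, since the radial function $r$ has unit gradient $\overline{\nabla}r=\overline{e}_0=\partial_r$ and $\Lambda'(r)=\lambda(r)$, one has $\overline{\nabla}\Lambda=\lambda(r)\,\overline{e}_0=V$ in $\overline{M}$. Differentiating once more with the Levi--Civita connection of \eqref{metric} --- for which $\overline{\nabla}_{\overline{e}_0}\overline{e}_0=0$, $\overline{e}_i\lambda=0$ and $\overline{\nabla}_{\overline{e}_i}\overline{e}_0=\zeta\,\overline{e}_i$ with $\zeta=\lambda'/\lambda$ --- gives $\overline{\nabla}_{\overline{e}_0}\overline{\nabla}\Lambda=\lambda'\overline{e}_0$ and $\overline{\nabla}_{\overline{e}_i}\overline{\nabla}\Lambda=\lambda'\overline{e}_i$, i.e.
\begin{equation*}
\overline{\nabla}^2\Lambda=\lambda'(r)\,\overline{g}.
\end{equation*}
Each formula in Lemma \ref{supp} should then fall out by splitting this identity into tangential and normal parts along $\Sigma$ and invoking the Gauss--Weingarten and Codazzi equations; note that the sign convention $h_{ij}=-\langle\overline{\nabla}_{X_j}X_i,\nu\rangle$ fixed above yields $\overline{\nabla}_{E_i}\nu=\sum_j h_{ij}E_j$.

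Concretely, I would argue as follows. Identity \eqref{1d-lad} is the decomposition of $V=\overline{\nabla}\Lambda$ into its part tangent to $\Sigma$, namely $\nabla\Lambda=\sum_i\langle V,E_i\rangle E_i=\sum_i\lambda\langle\overline{e}_0,E_i\rangle E_i$, and its normal part $\tau\nu$. For \eqref{1d-tau} I write $\tau=\langle\overline{\nabla}\Lambda,\nu\rangle$ and differentiate along $E_i$:
\begin{equation*}
\nabla_{E_i}\tau=\langle\overline{\nabla}_{E_i}\overline{\nabla}\Lambda,\nu\rangle+\langle\overline{\nabla}\Lambda,\overline{\nabla}_{E_i}\nu\rangle=\overline{\nabla}^2\Lambda(E_i,\nu)+\sum_j h_{ij}\langle\overline{\nabla}\Lambda,E_j\rangle,
\end{equation*}
where the first term vanishes since $\overline{\nabla}^2\Lambda=\lambda'\overline{g}$ and $E_i\perp\nu$, and the second equals $\sum_j h_{ij}\nabla_{E_j}\Lambda$ by \eqref{1d-lad}. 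For \eqref{2d-lad} I use the Gauss relation between the ambient and intrinsic Hessians of $\Lambda|_\Sigma$, namely $\nabla^2_{E_i,E_j}\Lambda=\overline{\nabla}^2\Lambda(E_i,E_j)-h_{ij}\langle\overline{\nabla}\Lambda,\nu\rangle$, which with $\overline{\nabla}^2\Lambda=\lambda'\overline{g}$ and $\tau=\langle\overline{\nabla}\Lambda,\nu\rangle$ becomes $\lambda'g_{ij}-\tau h_{ij}$. Finally, \eqref{2d-tau} follows by covariantly differentiating \eqref{1d-tau} on $\Sigma$,
\begin{equation*}
\nabla^2_{E_i,E_j}\tau=\sum_k(\nabla_{E_j}h_{ik})\,\nabla_{E_k}\Lambda+\sum_k h_{ik}\,\nabla^2_{E_k,E_j}\Lambda,
\end{equation*}
then substituting \eqref{2d-lad} into the last sum to obtain $\lambda'h_{ij}-\tau\sum_k h_{ik}h_{kj}$, and rewriting $\nabla_{E_j}h_{ik}=h_{ikj}=h_{ijk}-\overline{R}_{0ijk}$ by the Codazzi equation so that the first sum becomes $\sum_k(h_{ijk}-\overline{R}_{0ijk})\nabla_{E_k}\Lambda$, which is precisely the asserted identity.

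The only step needing real care is the computation $\overline{\nabla}^2\Lambda=\lambda'\overline{g}$, which rests on the explicit connection coefficients of the warped product (equivalently, on the well-known identity $\overline{\nabla}^2 r=\zeta(\overline{g}-dr\otimes dr)$); the rest is a routine application of Gauss--Weingarten and Codazzi carried out consistently with the sign convention for $h_{ij}$ used in this paper, and in the orthonormal adapted frame $\{E_i\}$ in which indices are raised and lowered trivially. I do not expect any genuine obstacle beyond bookkeeping with these conventions.
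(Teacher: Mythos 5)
Your proof is correct and complete. The paper itself does not prove Lemma \ref{supp} but instead defers to Lemma 2.2, Lemma 2.6 and Lemma 2.3 of \cite{CLW,Guan15,Jin}; your derivation, built on the central identity $\overline{\nabla}\Lambda=V=\lambda\,\overline{e}_0$ with $\overline{\nabla}^2\Lambda=\lambda'\,\overline{g}$ (i.e.\ $V$ is a closed conformal field), followed by the Gauss--Weingarten split and Codazzi, is exactly the standard route those references take, and you have correctly matched the sign conventions $h_{ij}=-\langle\overline{\nabla}_{X_j}X_i,\nu\rangle$, $\overline{\nabla}_{E_i}\nu=\sum_j h_{ij}E_j$ and $h_{ijk}-h_{ikj}=\overline{R}_{0ijk}$ used here. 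Your reading of \eqref{1d-lad} as the scalar $\nabla_{E_i}\Lambda=\langle V,E_i\rangle=\lambda\langle\overline{e}_0,E_i\rangle$ (the trailing $E_i$ in the paper's display being a harmless notational artifact) is also the one consistent with how $\nabla_{E_k}\Lambda$ enters \eqref{1d-tau} and \eqref{2d-tau}.
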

\begin{proof}
See Lemma 2.2, Lemma 2.6 and Lemma 2.3 in \cite{CLW}, \cite{Guan15} or \cite{Jin} for the details.
\end{proof}






\section{A  priori estimates}

In order to prove Theorem \ref{Main}, we use the degree theory for the
nonlinear elliptic equation developed in \cite{Li89} and the proof
here is similar to those in \cite{Al,Jin,Li-Sh,Li-Ol}. First, we consider
the family of equations for $0\leq t\leq 1$
\begin{eqnarray}\label{Eq2}
\frac{\sigma_k(\mu(\eta))}{\sigma_l(\mu(\eta))}=f^t( V, \nu(V)),
\end{eqnarray}
where
$f^t=tf(r,u, \nu)+(1-t)\phi(r) \frac{C_n^k}{C_n^l}((n-1)\zeta(r))^{k-l}$, $\zeta(r)=\frac{\lambda'}{\lambda}$ and $\phi$ is a positive function which satisfies the following conditions:

(a) $\phi(r)>0$,

(b) $\phi(r)\geq 1$ for $r\leq r_1$,

(c) $\phi(r)\leq 1$ for $r\geq r_2$,

(d) $\phi^{\prime}(r)<0$.

\subsection{$C^0$ Estimates}

Now, we can prove the following proposition which asserts that the
solution of the equation \eqref{Eq} has uniform $C^0$ bounds.

\begin{proposition}\label{n-2-C^0}
Under the assumptions \eqref{ASS1} and \eqref{ASS2}, if the $(\eta,k)$-convex hypersurface ${\Sigma}=\{(r(u),u)\mid
u \in M \}\subset \overline{M}$ satisfies the equation
\eqref{Eq2} for a given $t \in (0, 1]$, then
\begin{eqnarray*}
r_1<r(u)<r_2, \quad \forall \ u \in M.
\end{eqnarray*}
\end{proposition}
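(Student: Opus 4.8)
The plan is to use a standard maximum-principle argument at the extreme radial values, reducing the problem to a one-variable inequality that is exactly what hypotheses \eqref{ASS1} and \eqref{ASS2} (together with conditions (b), (c) on $\phi$) are designed to furnish. Suppose $\Sigma = \{(r(u),u)\mid u\in M\}$ solves \eqref{Eq2}. Let $u_0\in M$ be a point where $r$ attains its maximum. At such a point the tangential gradient $\nabla' r$ vanishes, so $v = \lambda(r(u_0))$, and the Hessian $\nabla'^2 r(u_0)$ is negative semidefinite. Plugging $r_i=0$ into the formulas of Lemma 2.1, one computes at $u_0$
\begin{eqnarray*}
h^i_j = \frac{1}{\lambda^2 v}\bigl(-\lambda r_{ij} + \lambda^2\lambda'\,\delta_{ij}\bigr) = \frac{\lambda'}{\lambda}\,\delta^i_j - \frac{1}{\lambda^2}\,r^i_j \;\geq\; \frac{\lambda'}{\lambda}\,\delta^i_j = \zeta(r(u_0))\,\delta^i_j,
\end{eqnarray*}
since $-r_{ij}\geq 0$. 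Consequently the principal curvatures $\kappa_i$ at $u_0$ satisfy $\kappa_i \geq \zeta(r(u_0))$ for every $i$, hence $\eta_{ij} = H\delta_{ij} - h_{ij}$ has eigenvalues $\mu_i = \sum_{j\neq i}\kappa_j \geq (n-1)\zeta(r(u_0))$.

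Next I would translate this into a lower bound on the operator. By the monotonicity of $\sigma_k/\sigma_l$ (the operator is elliptic, by Proposition \ref{ellipticconcave}) and its behaviour under scaling, $\mu_i \geq (n-1)\zeta(r(u_0))$ for all $i$ forces
\begin{eqnarray*}
\frac{\sigma_k}{\sigma_l}(\mu(\eta)) \;\geq\; \frac{\sigma_k}{\sigma_l}\bigl((n-1)\zeta(r(u_0))\,\mathbf{1}\bigr) \;=\; \frac{C_n^k}{C_n^l}\bigl((n-1)\zeta(r(u_0))\bigr)^{k-l}.
\end{eqnarray*}
On the other hand, the left side equals $f^t(V,\nu) = t f(V,\nu) + (1-t)\phi(r)\frac{C_n^k}{C_n^l}((n-1)\zeta(r))^{k-l}$ evaluated at $r = r(u_0)$. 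If we had $r(u_0)\geq r_2$, then \eqref{ASS2} gives $f(V,\nu) < \frac{C_n^k}{C_n^l}((n-1)\zeta)^{k-l}$ and condition (c) gives $\phi(r(u_0))\leq 1$, so $f^t(V,\nu) < \frac{C_n^k}{C_n^l}((n-1)\zeta(r(u_0)))^{k-l}$ — contradicting the displayed inequality. Hence $r(u_0) < r_2$, i.e. $r(u) < r_2$ everywhere. The lower bound $r(u) > r_1$ is symmetric: at a minimum point $u_1$ of $r$ one has $-r_{ij}\leq 0$, hence $\kappa_i \leq \zeta(r(u_1))$, $\mu_i \leq (n-1)\zeta(r(u_1))$, the operator inequality reverses, and \eqref{ASS1} with condition (b) (so $\phi(r(u_1))\geq 1$) gives the contradiction if $r(u_1)\leq r_1$.

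The only genuinely delicate point is the monotonicity step: one must know that increasing each eigenvalue of $h$ (while staying in the admissible cone) increases $\eta = \operatorname{tr}(h)\,g - h$ in the sense needed, and that $\sigma_k/\sigma_l$ is monotone along this direction. This is where ellipticity of $G$ on the $(\eta,k)$-convex cone, recorded in Proposition \ref{ellipticconcave}, is used; one checks that the comparison matrix $(n-1)\zeta\,\mathbf{1}$ itself lies in $\Gamma_k$ (it does, since its $\sigma_j$ are all positive), so the quotient is defined and the monotonicity comparison along the segment joining the two diagonal matrices applies. Everything else is the routine evaluation of Lemma 2.1 at a critical point of $r$ together with the sign of the Hessian there.
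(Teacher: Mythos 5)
Your proof is correct and follows essentially the same route as the paper's: evaluate the formulas of Lemma 2.1 at a critical point of $r$, use the sign of the Hessian there to bound $h^i_j$ and hence $\eta$ by $(n-1)\zeta\,\delta_{ij}$, compare the operator values, and contradict \eqref{ASS1} or \eqref{ASS2} via conditions (b), (c) on $\phi$. The only cosmetic difference is the justification of the operator comparison: the paper invokes concavity of $\sigma_k/\sigma_l$ (implicitly, superadditivity of the homogeneous degree-one version), while you invoke monotonicity on the convex cone $\Gamma_k$, which is arguably the more transparent route and is equally valid.
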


\begin{proof}
Assume $r(u)$ attains its maximum at $u_0 \in M$ and
$r(u_0)\geq r_2$, then recall
\begin{eqnarray*}
h^i_j=\frac{1}{\lambda^2 v}(\delta_{ik}-\frac{r^i r^k}{v^2})\bigg(-\lambda r_{kj}+2\lambda'r_kr_j+\lambda^2\lambda' \delta_{kj}\bigg),
\end{eqnarray*}
which implies together with the fact that the matrix $r_{ij}$ is
non-positive definite at $u_0$
\begin{eqnarray*}
h^{i}_{j}(u_0)=\frac{1}{\lambda^3}\bigg(-\lambda r_{ij}+\lambda^2\lambda' \delta_{ij}\bigg)\geq \frac{\lambda'}{\lambda} \delta_{ij}.
\end{eqnarray*}
Then
\begin{equation*}
  \eta^i_j(u_0)= H \delta^i_j-h^i_j\geq\frac{(n-1)\lambda'}{\lambda}\delta_{ij}.
\end{equation*}
Note that $\frac{\sigma_{k}}{\sigma_{l}}$ for $0\leq l\leq k-2$ is concave in
$\Gamma_{k}$. Thus
\begin{eqnarray*}
\frac{\sigma_k(\mu(\eta))}{\sigma_{l}(\mu(\eta))} \geq
\frac{\sigma_k(\frac{(n-1)\lambda'}{\lambda}\delta_{ij})}{\sigma_{l}(\frac{( n-1)\lambda'}{\lambda}\delta_{ij})}=\frac{C_n^k}{C_n^l}(\frac{( n-1)\lambda'}{\lambda})^{k-l}=\frac{C_n^k}{C_n^l}((n-1)\zeta(r))^{k-l}.
\end{eqnarray*}
So, we arrive at $u_0$
\begin{eqnarray*}
tf(r,u, \nu)+(1-t)\phi(r) \frac{C_n^k}{C_n^l}((n-1)\zeta(r))^{k-l}\geq
\frac{C_n^k}{C_n^l}((n-1)\zeta(r))^{k-l}.
\end{eqnarray*}
Thus, we obtain at $u_0$
\begin{eqnarray*}
f(r,u, \nu)\geq \frac{C_n^k}{C_n^l}((n-1)\zeta(r))^{k-l},
\end{eqnarray*}
which is in contradiction to \eqref{ASS2}. Thus, we have $r(u)<
r_2$ for $u \in M$. Similarly, we can obtain $r(u)> r_1$
for $u \in M$.
\end{proof}

Now, we prove the following uniqueness result.

\begin{proposition}\label{Uni}
There exists an unique $(\eta,k)$-convex solution to the
equation \eqref{Eq2} with $t=0$, namely $\Sigma_0=\{(r(u),  u) \in \overline{M} \mid
r(u)=r_0\}$, where $r_0$ satisfies $\varphi(r_0)=1$.
\end{proposition}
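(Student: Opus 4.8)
When $t=0$, equation \eqref{Eq2} reduces to
$$\frac{\sigma_k(\mu(\eta))}{\sigma_l(\mu(\eta))}=\phi(r)\,\frac{C_n^k}{C_n^l}\big((n-1)\zeta(r)\big)^{k-l}.$$
The plan is to show that a sphere $\{r\equiv r_0\}$ with $\varphi(r_0)=1$ is a solution, and then that it is the only $(\eta,k)$-convex one. For existence: on the slice $r\equiv r_0$ we have $\nabla' r=0$, hence $v=\lambda(r_0)$, and from the formula for $h^i_j$ in Lemma 2.1 (or directly from the computation already carried out in the proof of Proposition \ref{n-2-C^0} with $r_{ij}=0$) one gets $h^i_j=\zeta(r_0)\delta^i_j$, so $\eta^i_j=(n-1)\zeta(r_0)\delta^i_j$ and $\mu(\eta)=(n-1)\zeta(r_0)(1,\dots,1)\in\Gamma_k$. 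Plugging in gives $\sigma_k/\sigma_l(\mu(\eta))=\frac{C_n^k}{C_n^l}((n-1)\zeta(r_0))^{k-l}$, which equals the right-hand side exactly when $\varphi(r_0)=1$. That $r_0$ exists and is unique follows from $\varphi(0)>0$ together with $\varphi'<0$ and the boundary conditions (b), (c): by Proposition \ref{n-2-C^0}'s argument $\varphi\ge1$ on $[r_1$-side$]$ and $\varphi\le1$ on the $r_2$-side, so by continuity and strict monotonicity there is exactly one $r_0\in(r_1,r_2)$ with $\varphi(r_0)=1$.

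For uniqueness among general star-shaped $(\eta,k)$-convex solutions, I would use a comparison/maximum-principle argument on the radial function $r(u)$. Suppose $\Sigma=\{(r(u),u)\}$ solves the $t=0$ equation. At a point $u_0$ where $r$ attains its maximum, the Hessian $r_{ij}$ is non-positive definite and $\nabla'r=0$, so exactly as in the proof of Proposition \ref{n-2-C^0} one obtains $h^i_j(u_0)\ge\zeta(r(u_0))\delta^i_j$, hence $\eta^i_j(u_0)\ge(n-1)\zeta(r(u_0))\delta^i_j$, and by monotonicity and concavity of $\sigma_k/\sigma_l$,
$$\phi(r(u_0))\,\frac{C_n^k}{C_n^l}\big((n-1)\zeta(r(u_0))\big)^{k-l}=\frac{\sigma_k}{\sigma_l}(\mu(\eta))\Big|_{u_0}\ge\frac{C_n^k}{C_n^l}\big((n-1)\zeta(r(u_0))\big)^{k-l},$$
forcing $\phi(r(u_0))\ge1$, hence $r(u_0)\le r_0$ by (d). Running the same argument at a minimum point gives the reverse inequality $r(u_1)\ge r_0$, so $\min r\ge r_0\ge\max r$, i.e. $r\equiv r_0$.

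The main obstacle is making the maximum/minimum-point comparison fully rigorous: one must check that the inequality $h^i_j\ge\zeta\,\delta^i_j$ at an interior max (respectively the reverse at a min) combined with the fact that $\mu(\eta)$ lies in the Garding cone $\Gamma_k$ genuinely yields the claimed ordering of $\sigma_k/\sigma_l$ values — this uses that $\sigma_k/\sigma_l$ is monotone on $\Gamma_k$, which is part of Proposition \ref{ellipticconcave}, applied to the ordered pair of symmetric matrices $\eta(u_0)$ and $(n-1)\zeta\,I$. One should also confirm that strict monotonicity $\varphi'<0$ (condition (d)) upgrades the weak conclusions $\phi\ge1$, $\phi\le1$ into the strict comparison $r(u_0)\le r_0$ and $r(u_1)\ge r_0$ needed to pinch $r$ to a constant; here it is enough that $\varphi$ is strictly decreasing and equals $1$ only at $r_0$. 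No curvature-estimate machinery is needed — this proposition is an elementary consequence of the structure of the linearized-at-a-sphere problem plus the sign conditions on $\phi$.
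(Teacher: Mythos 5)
Your proposal is correct and follows essentially the same route as the paper: compare at the maximum and minimum of the radial function $r$, using $r_{ij}\leq 0$ (resp.\ $\geq 0$) and the formula for $h^i_j$ to get $\eta\geq(n-1)\zeta I$ (resp.\ $\leq$), then apply the ellipticity/monotonicity of $\sigma_k/\sigma_l$ on $\Gamma_k$ and the strict monotonicity $\phi'<0$ to pinch $r\equiv r_0$. You additionally spell out the existence half (verifying that the sphere $\{r\equiv r_0\}$ actually solves the $t=0$ equation and that $r_0$ exists), which the paper leaves implicit; otherwise the arguments coincide.
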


\begin{proof}
Let $\Sigma_0$ be a solution of \eqref{Eq2} for $t=0$, then
\begin{eqnarray*}
\frac{\sigma_k}{\sigma_l}(\mu(\eta))-\phi(r) \frac{C_n^k}{C_n^l}((n-1)\zeta(r))^{k-l}=0.
\end{eqnarray*}
Assume $r(u)$ attains its maximum $r_{max}$ at $u_0 \in
M$, then we have at $u_0$
\begin{equation*}
h^{i}_{j}=\frac{1}{\lambda^3}\bigg(-\lambda r_{ij}+\lambda^2\lambda' \delta_{ij}\bigg),
\end{equation*}
which implies together with the fact that the matrix $r_{ij}$ is
non-positive definite at $u_0$
\begin{eqnarray*}
\frac{\sigma_k(\mu(\eta))}{\sigma_{l}(\mu(\eta))} \geq
\frac{C_n^k}{C_n^l}((n-1)\zeta(r))^{k-l}.
\end{eqnarray*}
By the equation \eqref{Eq2}
\begin{eqnarray*}
\varphi(r_{max})\geq 1.
\end{eqnarray*}
Similarly,
\begin{eqnarray*}
\varphi(r_{min})\leq 1.
\end{eqnarray*}
Thus, since $\varphi$ is a decreasing function, we obtain
\begin{eqnarray*}
\varphi(r_{min})=\varphi(r_{max})=1.
\end{eqnarray*}
We conclude
\begin{equation*}
  r(x)=r_0, \quad\forall~( r(u), u) \in \overline{M},
\end{equation*}
where $r_0$ is the unique solution of
$\varphi(r_0)=1$.
\end{proof}

\subsection{$C^1$ Estimates}
In this section, we establish the gradient estimates for the equation \eqref{Eq2}.
The treatment of this section follows from \cite[Lemma 3.1]{CLW}.

We recall that a star-shaped hypersurface $\Sigma$ in $\overline{M}$ can be represented by
\begin{equation*}
  \Sigma=\{V(u)=(r(u),u)\mid u\in{M}\},
\end{equation*}
where $V$ is the position vector field of hypersurface $\Sigma$ in $\overline{M}$.
We define a function $\tau=\langle V,\nu\rangle$. It is clear that
$$\tau=\frac{r^2}{\sqrt{r^2+|Dr|^2}}.$$
\begin{theorem}\label{n-2-C1e}
Under the assumption \eqref{ASS3}, if the closed star-shaped  $(\eta,k)$-convex hypersurface $\Sigma=\{(r(u),u)\in \overline{M}\mid u \in M\}$ satisfies the curvature equation \eqref{Eq2}
 and $r$ has positive upper and lower
bound, then there exists a constant C depending only on $n, k, l, \|\lambda\|_{C^1}, \inf_{\Sigma} r,  \sup_{\Sigma} r, \inf_{\Sigma}f$ and $\|f\|_{C^1}$ such that
\begin{equation*}
|D r|\leq C.
\end{equation*}
\end{theorem}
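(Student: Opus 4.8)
The plan is to establish the gradient bound by a maximum principle argument applied to the support-type function $\tau = \langle V, \nu \rangle = r^2/\sqrt{r^2+|Dr|^2}$, exploiting the structural condition \eqref{ASS3}. Since $r$ has a priori positive upper and lower bounds (from Proposition \ref{n-2-C^0}), bounding $|Dr|$ from above is equivalent to bounding $\tau$ from below away from $0$; equivalently, one may work with $w = -\log \tau$ or with $\Lambda(r)/\tau$ and seek an upper bound at an interior maximum. First I would consider the auxiliary function $\psi = \log \tau - A \Lambda(r)$ (or a similar combination, with $A$ a large constant to be chosen), where $\Lambda(r) = \int_0^r \lambda(s)\,ds$ as in Lemma \ref{supp}, and suppose $\psi$ attains its minimum at some point $\overline{X} \in \Sigma$. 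At $\overline{X}$ we choose the adapted principal frame so that $h_{ij}$ is diagonal and $g_{ij} = \delta_{ij}$.

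The computation proceeds by differentiating $\psi$ twice using the formulas in Lemma \ref{supp}: from \eqref{1d-tau} we have $\nabla_{E_i}\tau = \sum_j (\nabla_{E_j}\Lambda) h_{ij}$, and from \eqref{2d-tau} the Hessian of $\tau$ involves $-\tau \sum_k h_{ik}h_{kj} + \lambda' h_{ij} + \sum_k(h_{ijk} - \overline{R}_{0ijk})\nabla_{E_k}\Lambda$; similarly \eqref{1d-lad} and \eqref{2d-lad} handle the $\Lambda$ term. Applying the linearized operator $L = G^{ij}\nabla^2_{E_i,E_j}$ (which is elliptic by Proposition \ref{ellipticconcave}) to $\psi$ and using the critical point condition $\nabla \psi = 0$ to substitute $\nabla \tau$ in terms of $\nabla \Lambda$, the first-derivative-of-$h$ terms $G^{ij}\sum_k h_{ijk}\nabla_{E_k}\Lambda$ get converted, via differentiating the equation \eqref{Eq2} in the direction $E_k$, into a term involving $\nabla_{E_k}(f^t)$. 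Here is exactly where \eqref{ASS3}, i.e. $\frac{\partial}{\partial r}(\lambda^{k-l} f) \le 0$ (and the analogous monotonicity built into $f^t$ via conditions (a)--(d) on $\phi$), enters: differentiating $(G(\eta))^{k-l} = f^t$ and pairing with $\nabla \Lambda = \lambda \langle \overline{e}_0, \cdot\rangle$, the good sign of the $r$-derivative of $\lambda^{k-l} f^t$ produces a favorable (nonpositive) contribution, while the $\nu$-dependence of $f$ is controlled by $\|f\|_{C^1}$ and the already-known terms. Combined with the $-\tau G^{ij}h_{ik}h_{kj}$ term (which is $\le 0$) and the positive term $A \lambda' \sum G^{ii} \ge A \lambda' (C_n^k/C_n^l)^{1/(k-l)} > 0$ coming from $-AL\Lambda = -A(\lambda'\sum G^{ii} - \tau G^{ij}h_{ij})$ together with $G^{ij}h_{ij}$ being controlled, choosing $A$ large forces a contradiction unless $\tau$ is bounded below at $\overline{X}$, hence everywhere.

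The main obstacle I expect is the careful bookkeeping of the curvature terms $\overline{R}_{0ijk}$, $\overline{R}_{i1i0;1}$ and their contractions against $G^{ij}\nabla_{E_k}\Lambda$: in the warped product $\overline{M} = I \times_\lambda M$ these are expressed through $\lambda$, $\lambda'$, $\lambda''$ and the curvature of $(M,g')$, and one must verify they are all bounded by $\|\lambda\|_{C^2}$, $\inf r$, $\sup r$ and the geometry of $M$ times factors already under control (in particular they do not contain uncontrolled powers of $h_{ij}$ after using the critical-point relation). A secondary technical point is ensuring that the terms linear in $h_{ij}$ (such as $\lambda' G^{ij}h_{ij}$ and the $\lambda' h_{ij}$ piece of $\nabla^2\tau$) are absorbed: since $G^{ij}h_{ij}$ relates to $\sigma_{k-1}/\sigma_l$-type quantities on $\eta$ which, for an $(\eta,k)$-convex hypersurface, can be estimated in terms of $f^t$ and $\sum G^{ii}$, these do not obstruct the argument. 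Once the contradiction is reached the bound $|Dr| \le C$ with the asserted dependence follows immediately from $\tau \ge c > 0$ and the two-sided bound on $r$.
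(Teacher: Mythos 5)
Your overall strategy is the same as the paper's: run a maximum-principle argument on a test function built from $\tau=\langle V,\nu\rangle$ and $\Lambda$, differentiate the equation once to trade $F^{ii}h_{iij}$ for derivatives of $\tilde f$, and use \eqref{ASS3} to kill the bad radial derivative of $f$. You also correctly identify that one reduces to a lower bound on $\tau$ and that the curvature terms $\overline R_{0ijk}$ are ultimately harmless. However, your specific test function $\psi=\log\tau - A\Lambda$ has the sign of the $\Lambda$-term wrong, and this is not a cosmetic choice but the crux of the argument.

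Here is the issue concretely. After aligning the frame so that $\langle V,E_1\rangle\neq 0$ and $\langle V,E_i\rangle=0$ for $i\geq 2$ (so that only $\Lambda_1\neq 0$; you must do this \emph{before} passing to a principal frame, and then use the critical-point identity to deduce $h_{1i}=0$ for $i\geq 2$, which is what allows you to diagonalize $h$ while keeping $\Lambda_i=0$, $i\geq2$), the critical-point equation for your $\psi$ reads $\tau_i/\tau = A\Lambda_i$. Combined with $\tau_i=h_{ii}\Lambda_i$ this forces $h_{11}=A\tau>0$. In the paper the test function is $-\ln\tau+\gamma(\Lambda)$ with $\gamma(\Lambda)=\alpha/\Lambda$, i.e.\ $\gamma'<0$, which forces $h_{11}=\tau\gamma'<0$. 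This sign is what drives the whole argument: from $h_{11}<0$ and $H>0$ one obtains some $j\geq2$ with $h_{jj}>0$, hence $\eta_{jj}<H<\eta_{11}$ and $G^{jj}>G^{11}$, whence
\[
F^{11}=\sum_{j\neq 1}G^{jj}\ \geq\ \tfrac12\sum_i G^{ii}\ =\ \tfrac1{2(n-1)}\sum_i F^{ii}.
\]
This lower bound is indispensable: the only term in the final inequality that scales like $\alpha^2$ (equivalently $A^2$) is $(\gamma''+(\gamma')^2)F^{11}\Lambda_1^2$, while there are competing terms (e.g.\ from $\gamma' F^{ii}\Lambda_{ii}=\gamma'\lambda'\sum F^{ii}-\gamma'\tau\tilde f$ and from $F^{ii}\overline R_{0ii1}\Lambda_1/\tau$) that scale like $\alpha\sum F^{ii}$. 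One closes the estimate precisely because $\sum F^{ii}\lesssim F^{11}$, so $\alpha^2 F^{11}$ beats $\alpha\sum F^{ii}$ for $\alpha$ large. With your sign, $h_{11}>0$ may well be the largest principal curvature, so $\eta_{11}$ may be the \emph{smallest} eigenvalue of $\eta$, making $G^{11}$ the largest and $F^{11}$ the smallest of the $F^{ii}$; then $F^{11}$ has no useful lower bound in terms of $\sum F^{ii}$, and the $A^2 F^{11}\Lambda_1^2$ term cannot absorb the $A\sum F^{ii}$ terms. This is a genuine gap, not a bookkeeping issue; one must take the monotone-decreasing $\gamma$ (as in the paper; even a linear decreasing choice $\gamma(\Lambda)=-A\Lambda$ would do, but $+A\Lambda$ does not). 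A secondary point: the linearized operator should be $F^{ij}\nabla^2_{ij}$, not $G^{ij}\nabla^2_{ij}$, when acting on functions of $h_{ij}$, since $G^{ij}\partial\eta_{ij}=F^{kl}\partial h_{kl}$; this is also what makes the identity $F^{ii}h_{ii}=G^{ii}\eta_{ii}=\tilde f$ available.
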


\begin{proof}
It is sufficient to obtain a positive lower bound of $\tau$. We consider the function
\begin{equation*}
  \Phi=-\ln\tau+\gamma(\Lambda),
\end{equation*}
where $\gamma(\Lambda)$ is a function which will be chosen later. Assume that $\Phi$ attains its maximum value at point $u_0$. If $V$ is parallel to the normal direction $\nu$ at $u_0$, we have $\langle V,\nu\rangle=|V|$. Thus our result holds. So we assume $V$ is not parallel to the normal direction $\nu$ at $u_0$. We can choose the local orthonomal frame $\{E_1,\cdots,E_n\}$ on $\Sigma$ satisfying
\begin{equation*}
  \langle V, E_1\rangle\neq 0, \quad \mbox{and} \quad \langle V,
E_i\rangle=0, \quad  \forall ~ i\geq 2.
\end{equation*}
Obviously, $V=\langle V, E_1\rangle E_1+ \langle V, \nu\rangle \nu$.
Then, we arrive at $u_0$
\begin{eqnarray}\label{Par-1}
0=\Phi_i= - \frac{\nabla_{E_i}\tau}{\tau}+ \gamma^{\prime} \nabla_{E_i} \Lambda,
\end{eqnarray}
\begin{eqnarray}\label{Par-2}
0\geq \Phi_{ii}=- \frac{\nabla^2_{E_i, E_i}\tau}{\tau}+ \frac{|\nabla_{E_i}\tau|^2}{\tau^2}+ \gamma^{\prime} \nabla^2_{E_i,E_i} \Lambda+\gamma^{\prime\prime} |\nabla_{E_i} \Lambda|^2.
\end{eqnarray}
From  Lemma \ref{supp}, \eqref{Par-1} and \eqref{Par-2}, we have
\begin{equation*}
  \begin{split}
     0\geq  -\frac{1}{\tau}(-\tau h_{il}h_{li}+\lambda^{\prime} h_{ii}+(h_{iil}-\overline{R}_{0iil})\Lambda_l)
   +(\gamma^{\prime\prime}+(\gamma^{\prime})^2)\Lambda_i^2+\gamma^{\prime}(\lambda^{\prime} g_{ii}-\tau h_{ii}).
  \end{split}
\end{equation*}
By \eqref{1d-tau} and \eqref{Par-1}, we obtain
\begin{eqnarray}\label{Par-3}
h_{11}=\tau \gamma^{\prime}, \quad \quad h_{i1}=0, \quad \forall ~i\geq 2.
\end{eqnarray}
Therefore, it is possible to rotate the coordinate system such that $\{E_1, \cdots, E_n\}$ are the principal curvature directions of the second fundamental form $(h_{ij})$, i.e., $h_{ij}=h_{ii}\delta_{ij}.$ Thus, from   \eqref{1d-tau}-\eqref{2d-tau}, \eqref{Par-2} and \eqref{Par-3},  we get
\begin{eqnarray}\label{ineq-1}
    0 & \geq & F^{ii}h_{ii}^2-\frac{1}{\tau}\lambda^{\prime}F^{ii}h_{ii}-
    \frac{1}{\tau}F^{ii}(h_{iil}-\overline{R}_{0iil})\Lambda_l\\
    \nonumber&&+(\gamma^{\prime\prime}+(\gamma^{\prime})^2)
    F^{ii}\Lambda_i^2-\gamma^{\prime}\tau F^{ii}h_{ii}+\gamma^{\prime}\lambda^{\prime}F^{ii}g_{ii} \\
    \nonumber & = & F^{ii}h_{ii}^2-\frac{1}{\tau}F^{ii}h_{ii1}\Lambda_1+
    \frac{1}{\tau}F^{ii}\overline{R}_{0ii1}\Lambda_1+(\gamma^{\prime\prime}+
    (\gamma^{\prime})^2) F^{11}\Lambda_1^2\\
     \nonumber && +\gamma^{\prime}\lambda^{\prime}
    F^{ii}g_{ii}-\frac{1}{\tau}\lambda^{\prime}F^{ii}h_{ii}-\gamma^{\prime}\tau F^{ii}h_{ii}.
\end{eqnarray}

Since $\eta_{ii}= \sum_{j\neq i} h_{jj}$, then
\begin{equation}\label{ht-c2-03}
\begin{aligned}
\sum_i F^{ii} h_{ii} =&\sum_i \left( \sum_k G^{kk} -G^{ii}\right) \left(\frac{1}{n-1} \sum_l \eta_{ll} -\eta_{ii}\right)\\
=& \sum_i G^{ii} \eta_{ii}\\
=&  \frac{1}{k-l} \left(\frac{\sigma_k(\eta)}{\sigma_l(\eta)}\right)^{\frac{1}{k-l}-1} \frac{\sum_i \eta_{ii}\sigma_{k-1}(\eta| i)\sigma_l(\eta)-\sigma_k(\eta) \sum_i \eta_{ii}\sigma_{l-1}(\eta| i)}{\sigma_l^2(\eta)}\\
=& \tilde{f},
\end{aligned}
\end{equation}
where $\tilde{f}=f^{\frac{1}{k-l}}$.
Note that the curvature equation \eqref{Eq}  can be written as
\begin{equation}\label{ht-eq-2}
G(\eta)=\tilde{f}.
\end{equation}
 Differentiating \eqref{ht-eq-2} with respect to $E_1$, we obtain
$$G^{ii} \eta_{ii1}= d_V\widetilde{f}(\nabla_{E_1}V)+h_{11}d_{\nu}\widetilde{f}(E_1).$$
In fact
\begin{eqnarray}\label{ht-c2-110}
\nonumber F^{ii}h_{ii1}&=&\sum_i\left (\sum_jG^{jj}-G^{ii}\right )h_{ii1}\\
\nonumber&=&\sum_iG^{ii}\eta_{ii1}\\
&=&d_V\widetilde{f}(\nabla_{E_1}V)+h_{11}d_{\nu}\widetilde{f}(E_1).
\end{eqnarray}
Putting \eqref{1d-lad}, \eqref{Par-3}, \eqref{ht-c2-03} and \eqref{ht-c2-110} into \eqref{ineq-1}, we derive
\begin{eqnarray*}
    \nonumber0 &\geq & F^{ii}h_{ii}^2-\frac{1}{\tau}F^{ii}(h_{ii1}-\overline{R}_{0ii1})\Lambda_1
    +(\gamma^{\prime\prime}+(\gamma^{\prime})^2)F^{11}\Lambda_1^2
    +\gamma^{\prime}\lambda^{\prime}F^{ii}g_{ii}-\frac{\lambda^{\prime}}{\tau}\widetilde{f}
    -\gamma^{\prime}\tau\widetilde{f}\\
    \nonumber&=&F^{ii}h_{ii}^2 -\frac{1}{\tau}d_V\widetilde{f}(\nabla_{E_1}V)\Lambda_1-\gamma^{\prime}d_{\nu}\widetilde{f}(E_1)\Lambda_1
    -\frac{1}{\tau} F^{ii}\overline{R}_{0i1i}\Lambda_1\\
   \nonumber &&+(\gamma^{\prime\prime}+(\gamma^{\prime})^2)
      F^{11}\Lambda_1^2+\gamma^{\prime}\lambda^{\prime}F^{ii}g_{ii}-\frac{\lambda^{\prime}}{\tau}\widetilde{f}
    -\gamma^{\prime}\tau\widetilde{f}\\
   \nonumber &=&F^{ii}h_{ii}^2 -\frac{1}{\tau}\left(\langle V,E_1\rangle d_V\widetilde{f}(\nabla_{E_1}V)+\lambda^{\prime}\widetilde{f}\right)-\gamma^{\prime}
    d_{\nu}\widetilde{f}(E_1)\langle V,E_1\rangle\\
    &&-\frac{1}{\tau} F^{ii}\overline{R}_{0i1i}\langle V,E_1\rangle
    +(\gamma^{\prime\prime}+(\gamma^{\prime})^2)
      F^{11}{\langle V,E_1\rangle}^2+\gamma^{\prime}\lambda^{\prime}F^{ii}g_{ii}
    -\gamma^{\prime}\tau\widetilde{f}.
\end{eqnarray*}
Since $V=\langle V,E_1\rangle E_1+\langle V,\nu\rangle \nu$, we have
\begin{eqnarray*}
  d_V\widetilde{f}(V,\nu)&=&\langle V,E_1\rangle d_V\widetilde{f}(\nabla_{E_1}V)+\langle V,\nu\rangle d_V\widetilde{f}(\nabla_{\nu}V).
\end{eqnarray*}
From \eqref{ASS3} and $V=\lambda\frac{\partial}{\partial r}$, we see that
\begin{eqnarray*}
  0&\geq & \frac{\partial}{\partial r}\left(\lambda^{k-l}f\right)=\frac{\partial}{\partial r}\left(\lambda^{k-l}\widetilde{f}^{k-l}\right)\\
  &=&(k-l)(\lambda\widetilde{f})^{k-l-1}\left(\lambda^{\prime}\widetilde{f}+d_V\widetilde{f}\right)\\
  &=&(k-l)(\lambda\widetilde{f})^{k-l-1}\left(\lambda^{\prime}\widetilde{f}+\langle V,E_1\rangle d_V\widetilde{f}(\nabla_{E_1}V)+\langle V,\nu\rangle d_V\widetilde{f}(\nabla_{\nu}V)\right).
\end{eqnarray*}
It follows that
$$-\left(\lambda^{\prime}\widetilde{f}+\langle V,E_1\rangle d_V\widetilde{f}(\nabla_{E_1}V)\right)\geq
\langle V,\nu\rangle d_V\widetilde{f}(\nabla_{\nu}V),$$
which implies
\begin{eqnarray}\label{eqam}
  \nonumber 0 &\geq& F^{ii}h_{ii}^2 +d_V\widetilde{f}(\nabla_{\nu}V)-\gamma^{\prime}
    d_{\nu}\widetilde{f}(E_1)\langle V,E_1\rangle
    -\frac{1}{\tau} F^{ii}\overline{R}_{0i1i}\langle V,E_1\rangle\\
    &&+(\gamma^{\prime\prime}+(\gamma^{\prime})^2)
      F^{11}{\langle V,E_1\rangle}^2+\gamma^{\prime}\lambda^{\prime}F^{ii}g_{ii}
    -\gamma^{\prime}\tau\widetilde{f}.
\end{eqnarray}
Choosing the function $\gamma(r)=\frac{\alpha}{r}$ for a positive constant $\alpha$, we get
\begin{equation}\label{eqna-2}
  \gamma^{\prime}(r)=-\frac{\alpha}{r^2},\quad\quad  \gamma^{\prime\prime}(r)=\frac{2\alpha}{r^3}.
\end{equation}
By \eqref{Par-3} and the choice of function $\gamma(r)$, we have $h_{11}<0$ at $u_0$. From $H>0$ we know that
\begin{equation}\label{F11}
   F^{11}=\sum_{j \neq 1} G^{jj} \geq \frac{1}{2} \sum_i G^{ii} =\frac{1}{2(n-1)}\sum_iF^{ii}\geq \frac{1}{2}\left(\frac{C_n^k}{C_n^l}\right)^{\frac{1}{k-l}}.
\end{equation}
 Putting \eqref{eqna-2} into \eqref{eqam}, we have
 \begin{eqnarray}\label{eqmm}
  \nonumber 0&\geq& F^{ii}h_{ii}^2 +d_V\widetilde{f}(\nabla_{\nu}V)+\frac{\alpha}{r^2}
    d_{\nu}\widetilde{f}(E_1)\langle V,E_1\rangle
    -\frac{1}{\tau} F^{ii}\overline{R}_{0i1i}\langle V,E_1\rangle\\
   \nonumber &&+(\frac{2\alpha}{r^3}+\frac{\alpha^2}{r^4})
      F^{11}{\langle V,E_1\rangle}^2-\frac{\alpha}{r^2}\lambda^{\prime}F^{ii}g_{ii}
    +\frac{\alpha}{r^2}\tau\widetilde{f}\\
   \nonumber &\geq &\tau^2\frac{\alpha^2}{r^4}F^{11}+(\frac{2\alpha}{r^3}+\frac{\alpha^2}{r^4})
      F^{11}{\langle V,E_1\rangle}^2 +\frac{\alpha}{r^2}
    d_{\nu}\widetilde{f}(E_1)\langle V,E_1\rangle\\
    \nonumber&&-\frac{1}{\tau} F^{ii}\overline{R}_{0i1i}\langle V,E_1\rangle
    +d_V\widetilde{f}(\nabla_{\nu}V)
    -\frac{\alpha}{r^2}\lambda^{\prime}F^{ii}g_{ii}
    +\frac{\alpha}{r^2}\tau\widetilde{f}\\
    \nonumber&=&\frac{\alpha^2}{r^4}F^{11}|V|^2+\frac{2\alpha}{r^3}F^{11}\langle V,E_1\rangle^2+\frac{\alpha}{r^2}
    d_{\nu}\widetilde{f}(E_1)\langle V,E_1\rangle\\
    &&-\frac{1}{\tau} F^{ii}\overline{R}_{0i1i}\langle V,E_1\rangle
    +d_V\widetilde{f}(\nabla_{\nu}V)
    -\frac{\alpha}{r^2}\lambda^{\prime}\sum_iF^{ii}
    +\frac{\alpha}{r^2}\tau\widetilde{f},
 \end{eqnarray}
 the third equality comes from $|V|^2=\langle V,E_1\rangle^2+\langle V,\nu\rangle^2$.

 Since $V=\langle V,E_1\rangle E_1+\langle V,\nu\rangle\nu$, we can find that $V \bot Span\{E_2,\cdots,E_n\}$. On the other hand, $E_1,\nu \bot Span\{E_2,\cdots,E_n\}$. It is possible to choose coordinate systems such that $\overline{e}_1\bot Span\{E_2,\cdots,E_n\}$, which
implies that the pair $\{V,\overline{e}_1\}$ and $\{\nu,E_1\}$ lie in the same plane and
$$Span\{E_2,\cdots,E_n\}=Span\{\overline{e}_2,\cdots,\overline{e}_n\}.$$
Therefore, we can choose $E_2 = \overline{e}_2 ,\cdots,E_n = \overline{e}_n$. The vector $\nu$ and $E_1$ can be decomposed into
\begin{equation}\label{R-1}
  \nu=\langle\nu,\overline{e}_0\rangle \overline{e}_0+\langle\nu,\overline{e}_1\rangle \overline{e}_1
=\frac{\tau}{\lambda}\overline{e}_0+\langle\nu,\overline{e}_1\rangle \overline{e}_1,
\end{equation}
\begin{equation}\label{R-2}
 E_1=\langle E_1,\overline{e}_0\rangle \overline{e}_0+\langle E_1,\overline{e}_1\rangle \overline{e}_1.
\end{equation}
By \eqref{R-1} and \eqref{R-2}, we obtain
\begin{eqnarray}\label{eqnt}
  \overline{R}_{0i1i} &=& \overline{R}(\nu,E_i,E_1,E_i)\\
  \nonumber&=& \frac{\tau}{\lambda}\langle E_1,\overline{e}_0\rangle\overline{R}(\overline{e}_0,\overline{e}_i,
  \overline{e}_0,\overline{e}_i)+\langle\nu,\overline{e}_1\rangle\langle E_1,\overline{e}_1\rangle\overline{R}(\overline{e}_1,
  \overline{e}_i,\overline{e}_1,\overline{e}_i)\\
  \nonumber&=&\frac{\tau}{\lambda}\langle E_1,\overline{e}_0\rangle\overline{R}(\overline{e}_0,\overline{e}_i,
  \overline{e}_0,\overline{e}_i)-\tau\frac{\langle\nu,\overline{e}_1\rangle^2}{\langle E_1,V\rangle}
  \overline{R}(\overline{e}_1,\overline{e}_i,\overline{e}_1,\overline{e}_i)\\
  \nonumber&=&\tau\left(\frac{1}{\lambda}\langle E_1,\overline{e}_0\rangle\overline{R}(\overline{e}_0,\overline{e}_i,
  \overline{e}_0,\overline{e}_i)-\frac{\langle\nu,\overline{e}_1\rangle^2}{\langle E_1,V\rangle}
  \overline{R}(\overline{e}_1,\overline{e}_i,\overline{e}_1,\overline{e}_i)\right),
\end{eqnarray}
the second equality comes from $0=\overline{R}_{ijk0}$ (see \cite[Lemma 2.1]{CLW}), the third equality comes from $0=\langle V,\overline{e}_1\rangle$.

From \eqref{F11} and \eqref{eqnt}, \eqref{eqmm} becomes
\begin{eqnarray*}
 \nonumber 0 &\geq&
   C_1F^{11}\frac{\alpha^2}{r_2^4}-C_2F^{11}\frac{\alpha}{r_2^3}-C_3\frac{\alpha}{r_1^2}|d_{\nu}\widetilde{f}
  (E_1)|-C_4F^{11}-|d_V\widetilde{f}(\nabla_{\nu}V)|-C_5\\
   &\geq& C\alpha^2F^{11}-C_2\alpha F^{11}-C\alpha|d_{\nu}\widetilde{f}
  (E_1)|-CF^{11}-|d_V\widetilde{f}(\nabla_{\nu}V)|-C,
\end{eqnarray*}
where $r_1=\inf_{\Sigma}r$, $r_2=\sup_{\Sigma}r$, $C_1$, $C_2$, $C_3$, $C_4$, $C_5$, $C$ depend on $n$, $r_1$, $r_2$, $\inf_{\Sigma}f$, the $C^1$ bounds of $\lambda$ and curvature $\overline{R}$. Thus, we have a contradiction when $\alpha$ is large enough. Hence, $V$ is parallel to the
normal $\nu$ which implies the lower bound of $\tau$.
\end{proof}

\subsection{$C^2$ Estimates}

  Under the  the assumption \eqref{ASS1}-\eqref{ASS3},  from Theorem   \ref{n-2-C^0} and  Theorem  \ref{n-2-C1e} we know that
there exists a positive constant $C$ depending on $\inf_{\Sigma} r$ and $\|r\|_{C^1}$ such that
$$\frac{1}{C} \leq \inf_{\Sigma} \tau \leq
\tau \leq \sup_{\Sigma} \tau \leq C.$$

\begin{theorem}\label{n-2-C2e}
Let $\Sigma$ be a closed star-shaped  $(\eta,k)$-convex hypersurface satisfying the curvature equation \eqref{Eq2}  and the assumption of Theorem \ref{Main}. Then, there exists a constant C depending only on $n,k,l,\inf_{\Sigma}\lambda',\inf_{\Sigma}r,\inf_{\Sigma}f,\|r\|_{C^2}$,$\|f\|_{C^2}$ and the curvature $\overline{R}$ such that for $1\leq i\leq n$
\begin{equation*}
|\kappa_{i}(u)|\le C, \quad \forall ~ u \in M.
\end{equation*}
\end{theorem}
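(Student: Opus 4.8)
The plan is to run the maximum–principle argument standard for prescribed Weingarten curvature equations, in the form adapted to the $\eta$–tensor quotient operator in \cite{CJ,CTX}, while carrying the warped–product curvature terms through exactly as in the proof of Theorem \ref{n-2-C1e}. Order the principal curvatures of $\Sigma$ as $\kappa_1\ge\cdots\ge\kappa_n$, so the eigenvalues of $\eta$ are $\eta_{ii}=H-\kappa_i=\sum_{j\ne i}\kappa_j$ with $\eta_{11}\le\cdots\le\eta_{nn}$, and let $L=F^{ij}\nabla_{ij}$ be the linearization of $h\mapsto G(\eta(h))$; at a point where $h$ is diagonal one has $\partial G/\partial h_{ii}=\sum_kG^{kk}-G^{ii}=F^{ii}$ and $\partial G/\partial h_{ij}=0$ for $i\ne j$, so $L$ is positive definite by Proposition \ref{ellipticconcave}, and since $\eta$ is linear in $h$, the map $h\mapsto G(\eta(h))$ is concave. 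The first observation is that it suffices to prove a one–sided bound $\kappa_1\le C$: $(\eta,k)$–convexity gives $\eta\in\Gamma_k$, hence $\sigma_1(\eta)=(n-1)H>0$; if $\kappa_1\le C$ then $\eta_{ii}\le(n-1)C$ for all $i$, which together with $\sum_i\eta_{ii}>0$ forces $\eta_{11}\ge-(n-1)^2C$ and then $0<H\le nC$, so that every $\kappa_i=H-\eta_{ii}$ is two–sidedly bounded.

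To bound $\kappa_1$ I would study $W=\log\kappa_{\max}+\Theta(\tau)$, where $\tau=\langle V,\nu\rangle$ lies between two positive constants by Proposition \ref{n-2-C^0} and Theorem \ref{n-2-C1e}, and $\Theta=\Theta(\tau)$ is a single–variable function (for instance $\Theta(\tau)=-a\log\tau$ with $a$ large) to be fixed at the end. At the maximum point $\overline X$ of $W$ one may, after the usual perturbation, assume $\kappa_{\max}=h_{11}$ is a smooth simple eigenvalue and choose an adapted frame diagonalizing $(h_{ij})$ at $\overline X$ with $\omega_i^k=0$; then $\nabla_iW=0$, i.e. $h_{11i}/h_{11}=-\Theta'\nabla_i\tau$, and $0\ge LW$. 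Expanding $LW$ uses: the Ricci identity \eqref{hii} to trade $h_{11ii}$ for $h_{ii11}$ up to the term $h_{11}h_{ii}^2-h_{11}^2h_{ii}$ and curvature terms; a double differentiation of \eqref{ht-eq-2} along $E_1$, in which the second–order term $G^{ij,rs}\eta_{ij1}\eta_{rs1}\le0$ (concavity, Proposition \ref{ellipticconcave}) is discarded in the main case and retained in the degenerate one; the identities \eqref{ht-c2-03} ($F^{ii}h_{ii}=\widetilde f$) and \eqref{ht-c2-110}; Lemma \ref{supp} for $\nabla_i\tau$ and $L\tau=-\tau F^{ii}h_{ii}^2+\lambda'\widetilde f+F^{ii}(h_{iik}-\overline R_{0iik})\nabla_k\Lambda$; and assumption \eqref{ASS3}, used precisely as in Theorem \ref{n-2-C1e} to absorb the $d_V\widetilde f$ contribution. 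Combining these and using the first–order condition to rewrite the $h_{11i}$ terms, one reaches an inequality of the schematic form
\[
0\ \ge\ c_0\,F^{ii}h_{ii}^2\ +\ \kappa_{\max}\widetilde f\ -\ \frac{1}{\kappa_{\max}}\,G^{ij,rs}\eta_{ij1}\eta_{rs1}\ -\ C\Big(1+\kappa_{\max}+\textstyle\sum_iF^{ii}\Big),
\]
where $c_0>0$ once $\Theta$ (hence $a$) is chosen using $\inf_\Sigma\tau>0$, the term $-\kappa_{\max}^{-1}G^{ij,rs}\eta_{ij1}\eta_{rs1}\ge0$ by concavity, and $C$ depends only on the quantities in the statement (the curvature terms $\overline R_{i1i1},\overline R_{i0i0},\overline R_{1010},\overline R_{i1i0;1},\overline R_{1i10;i}$ of \eqref{hii} are lower order).

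The crux is the degeneracy of $F^{11}=\sum_{k\ge2}G^{kk}$, which is the smallest of the $F^{ii}$ (since $G^{11}\ge\cdots\ge G^{nn}$) and may be close to $0$, so that $F^{ii}h_{ii}^2\ge F^{11}\kappa_{\max}^2$ alone need not control $\kappa_{\max}^2$. I would split on a threshold $\delta>0$. If $F^{11}\ge\delta\sum_iF^{ii}$, then $F^{ii}h_{ii}^2\ge\delta\big(\sum_iF^{ii}\big)\kappa_{\max}^2$, and since $\sum_iF^{ii}\ge(n-1)\big(C_n^k/C_n^l\big)^{1/(k-l)}>0$ by Proposition \ref{ellipticconcave}, dropping the concavity term, absorbing the linear–in–$\kappa_{\max}$ term by Cauchy–Schwarz, and dividing by $\sum_iF^{ii}$ yields $\kappa_{\max}^2\le C$. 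If $F^{11}<\delta\sum_iF^{ii}$, then $G^{11}$ dominates $\sum_kG^{kk}$ and one must keep the concavity term: writing out $-G^{ij,rs}\eta_{ij1}\eta_{rs1}$ at the diagonal and using Proposition \ref{th-lem-07}, $-G^{1i,i1}=(G^{11}-G^{ii})/(\eta_{ii}-\eta_{11})$ with $\eta_{ii}-\eta_{11}=\kappa_1-\kappa_i$, together with the first–order relations for $h_{11i}$ and the algebraic bound \eqref{Fi} (which gives $F^{ii}\ge\frac1{n(n-1)}\sum_jF^{jj}$ for $i\ge2$, so only the first direction is degenerate), one shows this positive term outweighs the remaining negative ones; here a further case distinction on the size of the most negative curvature $\kappa_n$, in the spirit of \cite{Ren,Ren1,CLW}, may be required. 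Either way $\kappa_{\max}(\overline X)\le C$, hence $W\le C$ and $\kappa_{\max}\le C$ on $\Sigma$, which with the first step proves the theorem.

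I expect the genuine obstacle to be the degenerate case $F^{11}\ll\sum_iF^{ii}$: making the single good term, the retained concavity term handled via Proposition \ref{th-lem-07}, and the rewritten third–order terms dominate the $d_\nu\widetilde f$ contributions and the commutator curvature terms $\overline R_{i1i0;1}-\overline R_{1i10;i}$ of \eqref{hii}, with all constants uniform in $n,k,l,\inf_\Sigma\lambda',\inf_\Sigma r,\inf_\Sigma f,\|r\|_{C^2},\|f\|_{C^2}$ and $\overline R$; the rest is the routine bookkeeping of this circle of ideas.
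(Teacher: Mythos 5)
Your overall template (auxiliary function, maximum principle, concavity of $G$, Proposition \ref{th-lem-07} to handle the $G^{1i,i1}$ terms, and a case split on the degenerate direction) is the right circle of ideas, and your reduction to a one-sided bound on $\kappa_1$ is correct. But there is a concrete gap: your test function $W=\log\kappa_{\max}+\Theta(\tau)$ is missing the term $A\Lambda$ that the paper's function $P=\log\kappa_{\max}-\log(\tau-a)+A\Lambda$ carries, and in the warped-product setting this omission is fatal. The ambient curvature $\overline R$ enters the differential inequality at the maximum point with a contribution of size $-C\sum_i F^{ii}$: it comes from $F^{ii}\overline R_{0iik}\nabla_k\Lambda$ in $L\tau$ (Lemma \ref{supp}) and from $2F^{ii}\overline R_{i1i1}-F^{ii}\overline R_{i0i0}$ and the derivative-of-curvature terms in \eqref{hii}. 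In the regime where every $|h_{ii}|$, $i\geq 2$, is small relative to $h_{11}$ (the paper's Step 2, Case 1), the equation forces $\sum_iF^{ii}\gtrsim h_{11}^{k-1-l}$ to be large, while $F^{ii}h_{ii}^2$ need not dominate $\sum_iF^{ii}$; so the negative term $-C\sum_iF^{ii}$ cannot be absorbed by your good terms. The paper's $A\Lambda$ term produces, via \eqref{2d-lad}, exactly $AF^{ii}\Lambda_{ii}=A\lambda'\sum_iF^{ii}-A\tau\widetilde f$; this is the only source of a positive term proportional to $\sum_iF^{ii}$, and it is where the hypothesis $\lambda'>0$ is used. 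A single-variable function $\Theta(\tau)$ cannot create such a term, since $L\tau=-\tau F^{ii}h_{ii}^2+\lambda'\widetilde f+F^{ii}(h_{iik}-\overline R_{0iik})\nabla_k\Lambda$; and worse, enlarging the coefficient of $\Theta$ (your ``$a$ large'') scales the curvature piece $F^{ii}\overline R_{0iik}\nabla_k\Lambda$ with it, so the offending $-C\sum_iF^{ii}$ grows at the same rate as your good $F^{ii}h_{ii}^2$ term.

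Secondly, your case split ($F^{11}\geq\delta\sum_iF^{ii}$ versus $F^{11}<\delta\sum_iF^{ii}$) is not the one that closes the argument. The paper splits on whether $|h_{ii}|\leq\delta h_{11}$ for all $i\geq2$: in the first case it estimates $\sum_iF^{ii}$ from below by a power of $h_{11}$ using the structure of $\sigma_{k-1}(\eta\mid1)$ and then invokes the $A\lambda'\sum_iF^{ii}$ term; in the second case $F^{ii}h_{ii}^2\geq\delta^2F^{22}h_{11}^2$ together with \eqref{Fi} already gives a term $\gtrsim h_{11}^2$. In your degenerate case ($F^{11}$ small) you lean on the concavity term $-\tfrac{2}{h_{11}}\sum_{i\geq2}G^{1i,i1}h_{1i1}^2$, but this is only useful when $h_{11i}$ is not small; after using the first-order condition $h_{11i}/h_{11}=-\Theta'\tau_i=-\Theta'h_{ii}\langle V,E_i\rangle$, these third-order quantities can vanish together with the $h_{ii}$, so the concavity term cannot by itself dominate $-C\sum_iF^{ii}$. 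Adding $A\Lambda$ to your auxiliary function and switching to the paper's split repairs both issues; the remainder of your outline (including Step 3's bound $|h_{ii}|\leq C_5A$ for $i\geq2$ and the refined Step 4 treatment of $F^{11}h_{111}^2/h_{11}^2$ via Proposition \ref{th-lem-07}) then goes through as in the paper.
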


\begin{proof}
Since $\eta\in\Gamma_{k}\subset\Gamma_{1}$, we see that the mean curvature is positive. It suffices to prove that the largest curvature $\kappa_{\mbox{max}}$ is uniformly bounded from above. Take the auxiliary function
\begin{equation*}
  P=\ln\kappa_{max}-\ln(\tau-a)+A\Lambda,
\end{equation*}
where $a=\frac{1}{2}\inf_{\Sigma}(\tau)$ and $A>1$ is a constant to be determined later. Assume that $P$ attains its maximum value at point $u_0$. We can choose a local orthonormal frame  $\{E_{1}, E_{2}, \cdots, E_{n}\}$ near $u_0$ such that
$$h_{ii}=\delta_{ij}h_{ij}, \quad  h_{11}\geq h_{22}\geq \cdots \geq h_{nn}$$
at $u_0$. Recalling that $\eta_{ii}=\sum_{k\neq i}h_{kk}$, we have
$$\eta_{11}\leq \eta_{22}\leq\cdots\leq\eta_{nn}.$$
It follows that
$$G^{11}\geq G^{22}\geq\cdots\geq G^{nn},\quad F^{11}\leq F^{22}\leq\dots\leq F^{nn}.$$
We define a new function $Q$ by
$$ Q=\ln h_{11}-\ln(\tau-a)+A\Lambda.$$
Since $h_{11}(u_0)=\kappa_{\mbox{max}}(u_0)$ and $h_{11}\leq\kappa_{\mbox{max}}$ near $u_0$, $Q$ achieves a maximum at $u_0$.

Hence
\begin{equation}\label{ht-c2-01}
0=Q_i=\frac{h_{11i}}{h_{11}}-\frac{\tau_i}{\tau-a}+A\Lambda_i,
\end{equation}
\begin{equation}\label{ht-c2-02}
0\geq F^{ii}Q_{ii}=F^{ii}(\ln h_{11})_{ii}-F^{ii}(\ln(\tau-a))_{ii}+AF^{ii}\Lambda_{ii}.
\end{equation}

We divide our proof into four steps.

\textbf{Step 1}:  We claim that
\begin{eqnarray}\label{ht-c2-1}
  0 &\geq& -\frac{2}{h_{11}}\sum_{i\geq2}G^{1i,i1}h_{1i1}^2 -\frac{F^{ii}h_{11i}^2}{h_{11}^2}
  +\frac{aF^{ii}h_{ii}^2}{\tau-a}
  +F^{ii}\frac{\tau_i^2}{(\tau-a)^2}\\
  \nonumber&&+(A\lambda'-C_0)\sum_iF^{ii}-C_0h_{11}-\frac{C_0(1+\sum_iF^{ii})}{h_{11}} -AC_0,
\end{eqnarray}
where $C_0$ depends on $\inf_{\Sigma}r,\inf_{\Sigma}f,\|r\|_{C^2},\|f\|_{C^2}$ and the curvature $\overline{R}$.

Using the similar argument in \eqref{ht-c2-110}, we obtain
\begin{equation}\label{eqe}
  F^{ii}h_{iij}=d_V\widetilde{f}(\nabla_{E_j}V)+h_{jj}d_{\nu}\widetilde{f}(E_j).
\end{equation}
By Gauss formula and Weingarten formula,
\begin{equation}\label{tau}
  \tau_i=h_{ii}\langle V,E_i\rangle, \quad \tau_{ii}=\sum_j h_{iji}\langle V, E_j\rangle-\tau h_{ii}^2+h_{ii}.
\end{equation}
Combined with \eqref{eqe}, \eqref{tau} and Codazzi formula, we have
\begin{eqnarray}\label{ht-c2-05}
\nonumber-F^{ii}(\ln(\tau-a))_{ii}&=&-F^{ii}(\frac{\tau_{ii}}{\tau-a}-\frac{\tau_i^2}{(\tau-a)^2})\\
\nonumber&=&-\frac{1}{\tau-a}\sum_jh_{jj}(d_{\nu}\widetilde{f})(E_j)\langle V,E_j\rangle
-\frac{1}{\tau-a}\sum_jd_V\widetilde{f}(\nabla_{E_j}V)\langle V,E_j\rangle\\
\nonumber&&-\frac{1}{\tau-a}\sum_{i,j}\overline{R}_{0iji}F^{ii}\langle V,E_j\rangle+\frac{\tau F^{ii}h_{ii}^2}{\tau-a}-\frac{1}{\tau-a}
\widetilde{f}+F^{ii}\frac{\tau_i^2}{(\tau-a)^2}\\
&\geq&-\frac{1}{\tau-a}\sum_jh_{jj}(d_{\nu}\widetilde{f})(E_j)\langle V,E_j\rangle+\frac{\tau F^{ii}h_{ii}^2}{\tau-a}
+F^{ii}\frac{\tau_i^2}{(\tau-a)^2}-C_1\sum_iF^{ii}-C_1,
\end{eqnarray}
where $C_1$ depends on $\inf_{\Sigma}r,\|r\|_{C^1}$, $\|f\|_{C^1}$ and the curvature $\overline{R}$.

Differentiating \eqref{ht-eq-2} with respect to $E_1$ twice, we obtain
$$G^{ij}\eta_{ij1}=d_V\widetilde{f}(\nabla_{E_1}V)+h_{1k}d_{\nu}\widetilde{f}(E_k)$$
and
\begin{eqnarray*}
   \nonumber G^{ij,rs}\eta_{ij1}\eta_{rs1}+G^{ij}\eta_{ij11}  &=&d_V^2\widetilde{f}(\nabla_{E_1}V,\nabla_{E_1}V)+d_V\widetilde{f}(\nabla_{E_1,E_1}^2V) \\
 \nonumber &&+2d_Vd_{\nu}\widetilde{f}
  (\nabla_{E_1}V,\nabla_{E_1}\nu)+d_{\nu}^2\widetilde{f}(\nabla_{E_1}\nu,\nabla_{E_1}\nu)+d_{\nu}\widetilde{f}(\nabla_{E_1,E_1}^2\nu)\\
  &\geq& \sum_ih_{1i1}(d_{\nu}\widetilde{f})(E_i)-C_2h_{11}^2-C_2h_{11}-C_2.
\end{eqnarray*}
Applying the concavity of $G$, we derive
$$-G^{ij,rs} \eta_{ij1} \eta_{rs1} \geq -2 \sum_{i\geq2} G^{1i, i1} \eta_{1i1}^2=-2 \sum_{i\geq 2} G^{1i, i1} h_{1i1}^2.$$
It follows that
   \begin{equation}\label{Fii}
     F^{ii}h_{ii11}=G^{ii}\eta_{ii11}\geq  -2\sum_{i\geq2}G^{1i,i1}h_{1i1}^2+ \sum_i h_{1i1} (d_{\nu} \widetilde{f})(E_i) -C_2h_{11}^2-C_2h_{11}-C_2,
   \end{equation}
where $C_2$ depends on $\|f\|_{C^2}$, $\|r\|_{C^2}$.

Combined with \eqref{hii}, \eqref{Fii} and Codazzi formula, we have
\begin{eqnarray}\label{ht-c2-06}
\nonumber F^{ii}(\ln h_{11})_{ii}&=&\frac{F^{ii}h_{11ii}}{h_{11}}-\frac{F^{ii}h_{11i}^2}{h_{11}^2}\\
\nonumber&\geq& -\frac{2}{h_{11}}\sum_{i\geq2}G^{1i,i1}h_{1i1}^2 +\frac{1}{h_{11}}
\sum_i h_{11i} (d_{\nu}\widetilde{f})(E_i)+\frac{1}{h_{11}}\sum_i\overline{R}_{01i1}(d_{\nu}\widetilde{f})(E_i)\\
\nonumber&&+h_{11}\widetilde{f}-F_{ii}h_{ii}^2
+\frac{F^{ii}}{h_{11}}(\overline{R}_{1i10;i}-\overline{R}_{i1i0;1})+2F^{ii}\overline{R}
_{i1i1}-F^{ii}\overline{R}_{i0i0}\\
\nonumber&&
-C_2\sum_iF^{ii}+\frac{\widetilde{f}}{h_{11}}\overline{R}_{1010}
-\frac{F^{ii}h_{11i}^2}{h_{11}^2}-C_2h_{11}-\frac{C_2}{h_{11}}-C_2\\
&\geq&- \frac{2}{h_{11}}\sum_{i\geq 2}G^{1i, i1}h_{1i1}^2  +\frac{1}{h_{11}}
\sum_i h_{11i} (d_{\nu}\widetilde{f})(E_i)-F^{ii}h_{ii}^2\\
\nonumber&&-\frac{F^{ii}h_{11i}^2}{h_{11}^2}-C_3h_{11}-\frac{C_3}{h_{11}}-\frac{C_3\sum_iF^{ii}}{h_{11}}-C_3\sum_iF^{ii}-C_3,
\end{eqnarray}
where $C_3$ depends on $\inf_{\Sigma}f,\|f\|_{C^2}$, $\|r\|_{C^2}$ and the curvature $\overline{R}$.

By \eqref{2d-lad}, we derive
\begin{equation}\label{AFii}
  AF^{ii}\Lambda_{ii}=A\lambda'F^{ii}g_{ii}-A\tau F^{ii}h_{ii}=A\lambda'\sum_iF^{ii}-A\tau\widetilde{f}.
\end{equation}
Taking \eqref{ht-c2-05}, \eqref{ht-c2-06} and \eqref{AFii} into \eqref{ht-c2-02}, we get
\begin{eqnarray*}
  0 &\geq& -\frac{2}{h_{11}}\sum_{i\geq2}G^{1i,i1}h_{1i1}^2 -\frac{F^{ii}h_{11i}^2}{h_{11}^2}
  +\frac{1}{h_{11}}\sum_ih_{11i}(d_{\nu}\widetilde{f})(E_i)\\
 \nonumber&& -\frac{1}{\tau-a} \sum_jh_{jj}(d_{\nu}\widetilde{f})(E_j)\langle V,E_j\rangle+\frac{aF^{ii}h_{ii}^2}{\tau-a}
  +F^{ii}\frac{\tau_i^2}{(\tau-a)^2}\\
  \nonumber&&+(A\lambda'-C_4)\sum_iF^{ii}-C_4h_{11}-\frac{C_4(1+\sum_iF^{ii})}{h_{11}}
  -AC_4.
\end{eqnarray*}
By \eqref{ht-c2-01}, \eqref{tau},
\begin{eqnarray*}
   &&\frac{1}{h_{11}}\sum_ih_{11i}(d_{\nu}\widetilde{f})(E_i)-\frac{1}{\tau-a} \sum_jh_{jj}(d_{\nu}\widetilde{f})(E_j)\langle V,E_j\rangle \\
  \nonumber&=&\sum_i\left(\frac{h_{11i}}{h_{11}}-\frac{\tau_i}{\tau-a}\right)(d_{\nu}\widetilde{f})(E_i)\\
  \nonumber&=&-A\sum_i(d_{\nu}\widetilde{f})(E_i)\langle V,E_i\rangle \\
  \nonumber&\geq&-AC_4,
\end{eqnarray*}
which implies the inequality \eqref{ht-c2-1}.

\textbf{Step 2}:
There exists a positive constant $\delta<\frac{1}{n-2}$ such that
$$\frac{C_{n-1}^{k-1} [1-(n-2)\delta]^{k-1} -(n-1) \delta C_{n-1}^{k-2} [1+(n-2)\delta]^{k-2} }{C_n^l [1+(n-2)\delta]^l } >\frac{C_{n-1}^{k-1}}{2C_n^l}.$$

We claim that there exists a constant $B>1$ depending on $n ,k, l, \delta$, $\inf_{\Sigma} r$, $\inf_{\Sigma} f$, $\|r\|_{C^2}$, $\|f\|_{C^2}$ and the curvature $\overline{R}$, such that
\begin{equation*}
\frac{aF^{ii} h_{ii}^2}{2(\tau-a)}+\frac{A\lambda'-C_0}{2}\sum_i F^{ii}\geq C_0h_{11},
\end{equation*}
if  $h_{11} \geq B$, $A= \left( 4\|f\|_{C^0}^{1-\frac{1}{k-l}}  \frac{k C_n^l}{(n-k+1) C_{n-1}^{k-1}\inf\lambda'} +\frac{27}{\inf\lambda'}  \right) C_0$.

Case 1: $|h_{ii}|\leq \delta h_{11}$ for all $i\geq 2$.\\
In this case, we have
\begin{equation*}
|\eta_{11}| \leq (n-1) \delta h_{11}, \quad [1-(n-2)\delta]h_{11}\leq \eta_{22}\leq \cdots \leq \eta_{nn}\leq [1+(n-2)\delta]h_{11}.
\end{equation*}
By the definitions of $G^{ii}$ and $F^{ii}$, we obtain
\begin{equation*}
\begin{aligned}
\sum_i F^{ii}&=(n-1)\sum_i G^{ii}\\
&= \frac{n-1}{k-l} \left(\frac{\sigma_k(\eta)}{\sigma_l(\eta)}\right)^{\frac{1}{k-l}-1} \frac{(n-k+1)\sigma_{k-1}(\eta)\sigma_l(\eta)-(n-l+1)\sigma_k(\eta)\sigma_{l-1}(\eta)}{\sigma_l^2(\eta)}\\
&\geq \frac{C_n^k}{C_n^{k-1}} \left(\frac{\sigma_k(\eta)}{\sigma_l(\eta)}\right)^{\frac{1}{k-l}-1} \left(\frac{\sigma_{k-1}(\eta)}{\sigma_l(\eta)}\right)  \\
&= \frac{C_n^k}{C_n^{k-1}} \left(\frac{\sigma_k(\eta)}{\sigma_l(\eta)}\right)^{\frac{1}{k-l}-1} \left(\frac{
\sigma_{k-1}(\eta|1)+\eta_{11}\sigma_{k-2}(\eta|1)}{\sigma_l(\eta)}\right)  \\
&\geq \frac{n-k+1}{k} f^{\frac{1}{k-l}-1} \frac{C_{n-1}^{k-1} [1-(n-2)\delta]^{k-1} -(n-1) \delta C_{n-1}^{k-2} [1+(n-2)\delta]^{k-2} }{C_n^l [1+(n-2)\delta]^l } h_{11}^{k-1-l}\\
&\geq f^{\frac{1}{k-l}-1} \frac{ (n-k+1)C_{n-1}^{k-1}}{2kC_n^l  } h_{11},
\end{aligned}
\end{equation*}
 which implies that
$$C_0h_{11} \leq \frac{A\lambda'-27C_0}{2} \sum_i F^{ii}.$$

Case 2: $h_{22} > \delta h_{11}$ or $h_{nn} <- \delta h_{11}$.\\
In this case, we have
 \begin{equation*}
\begin{aligned}
\frac{a F^{ii} h_{ii}^2}{2(\tau-a)}&\geq \frac{a}{2(\sup \tau-a)} \left(F^{22} h_{22}^2+F^{nn} h_{nn}^2\right)\\
&\geq  \frac{a\delta^2}{2(\sup \tau-a)} F^{22} h_{11}^2\\
&\geq  \frac{a\delta^2}{2n(\sup \tau-a)} \sum_i G^{ii}   h^2_{11} \\
&\geq  \left(\frac{C_n^k}{C_n^l}\right)^{\frac{1}{k-l}} \frac{a\delta^2 h_{11}}{2n(\sup \tau-a)}   h_{11}.
\end{aligned}
\end{equation*}
Then, we conclude that
$$\frac{a F^{ii} h_{ii}^2}{2(\tau-a)}\geq C_0 h_{11},$$
if
 $$h_{11} \geq \left(\left(\frac{C_n^k}{C_n^l}\right)^{\frac{1}{k-l}} \frac{a\delta^2}{2n(\sup \tau-a)} \right)^{-1}C_0.$$

\textbf{Step 3}:   We claim that
$$|h_{ii}|\leq C_5A, \quad \forall~i\geq2,$$
 if  $h_{11} \geq B>1$, where $C_5$ is a constant depending on $n, k, l$, $\inf_{\Sigma} r$, $\inf_
 {\Sigma}f$, $\|r\|_{C^2}$, $\|f\|_{C^2}$ and the curvature $\overline{R}$.

Combined with Step 1 and Step 2, we obtain
\begin{eqnarray}\label{ht-c2-32}
\nonumber0&\geq& - \frac{2}{h_{11}} \sum_{i\geq 2}G^{1i, i1}h_{1i1}^2 -\frac{F^{ii}h_{11i}^2}{h_{11}^2}
+\frac{aF^{ii} h_{ii}^2}{2(\tau-a)}+\frac{F^{ii}{\tau}_i^2}{(\tau-a)^2}\\
&& + \frac{A\lambda'-C_0}{2}\sum_i F^{ii}-\frac{C_0(1+\sum_iF^{ii})}{h_{11}}-AC_0.
\end{eqnarray}
Use \eqref{ht-c2-01}, the concavity of $G$ and Cauchy-Schwarz inequality,
\begin{eqnarray*}
0&\geq&  -\frac{1+\epsilon}{(\tau-a)^2} F^{ii} \tau_i^2 - (1+ \frac{1}{\epsilon})A^2 F^{ii} \Lambda_i^2\\
&&+\frac{aF^{ii} h_{ii}^2}{2(\tau-a)}+\frac{F^{ii}\tau_i^2}{(\tau-a)^2} + \frac{A\lambda'-C_0}{2}\sum_i F^{ii}-C_0(1+\sum_iF^{ii})-AC_0\\
&\geq& \left(\frac{a}{2(\tau-a)}-\frac{C_0\epsilon}{(\tau-a)^2}\right) F^{ii}h^2_{ii}- \left((1+ \frac{1}{\epsilon})A^2C_0- \frac{A\lambda'-C_0}{2}\right) \sum_i F^{ii}- \left(\sum_iF^{ii}+A+1\right)C_0,
\end{eqnarray*}
where $\tau_i=h_{ii} \langle V, E_i\rangle$ in the second inequality.
Choose $\epsilon=\frac{a(\tau-a)}{4C_0}$,
\begin{equation}\label{ht-c2-31}
\begin{aligned}
0&\geq \frac{a}{4(\tau-a)} F^{ii}h^2_{ii}- \left((1+ \frac{4C_0}{a(\tau-a)})A^2C_0- \frac{A\lambda'-C_0}{2}\right) \sum_i F^{ii}-\left(\sum_iF^{ii}+A+1\right) C_0\\
&\geq \frac{a}{4(\sup \tau-a)} F^{ii}h^2_{ii}-\left((1+ \frac{4C_0}{a^2})A^2C_0- \frac{A\lambda'-C_0}{2}\right) \sum_i F^{ii}-\left(\sum_iF^{ii}+A+1\right) C_0.
\end{aligned}
\end{equation}
By \eqref{Fi} and \eqref{ht-c2-31}, we have
\begin{eqnarray*}
 0&\geq& \frac{a}{4(\sup \tau-a)n(n-1)}  \left(\sum_{k\geq 2} h_{kk}^2\right) \sum_iF^{ii}\\
&&-\left((1+ \frac{4C_0}{a^2})A^2C_0- \frac{A\lambda'-C_0}{2}+C_0+ \frac{(A+1)C_0}{(n-1)}\left(\frac{C_n^k}{C_n^l}\right)^{-\frac{1}{k-l}}\right) \sum_i F^{ii},
\end{eqnarray*}
which implies that
$$\sum_{k\geq 2} h_{kk}^2 \leq C_5^2A^2,$$
where $C_5$ is a constant depending on $n, k, l$, $\inf_{\Sigma} r$, $\inf _{\Sigma}f$, $\|r\|_{C^2}$, $\|f\|_{C^2}$ and the curvature $\overline{R}$.

\textbf{Step 4}:
We claim that there exists a constant $C$ depending on $n, k, l$, $\inf_{\Sigma}\lambda'$, $\inf_{\Sigma} r$, $\inf _{\Sigma}f$, $\|r\|_{C^2}$, $\|f\|_{C^2}$ and the curvature $\overline{R}$ such that $$h_{11}\leq C.$$
Without loss of generality, we assume that
\begin{equation}\label{ab}
  h_{11} \geq \max \left\{B, \left(\frac{32n C_0 A^2 (\sup \tau -a)}{\varepsilon a}\right)^{\frac{1}{2}}, \frac{C_5 A}{\beta}\right\},
\end{equation}
where $\beta<\frac{1}{2}$ will be determined later.
Recalling $\tau_1=h_{11} \langle V, E_1\rangle$, by  \eqref{ht-c2-01} and Cauchy-Schwarz inequality, we have
\begin{equation*}
\begin{aligned}
\frac{F^{11} h_{111}^2}{h_{11}^2}&\leq \frac{1+\varepsilon}{(\tau-a)^2} F^{11} \tau_1^2+(1+\frac{1}{\varepsilon}) A^2 F^{11}\langle V, E_1\rangle^2\\
&\leq \frac{F^{11} \tau_1^2}{(\tau-a)^2} +\frac{C_0\varepsilon F^{11} h_{11}^2}{(\tau-a)^2}  +(\frac{1+\varepsilon}{\varepsilon}) C_0A^2 F^{11}.
\end{aligned}
\end{equation*}
Choose $\varepsilon\leq \min\{\frac{a(\tau-a)}{16C_0},1\}$, such that
$$\frac{F^{11} h_{111}^2}{h_{11}^2}\leq \frac{F^{11} \tau_1^2}{(\tau-a)^2} +\frac{a F^{ii} h_{ii}^2}{16(\tau-a)}  +\frac{2C_0A^2 F^{11}}{\varepsilon}.$$
 Hence according to Step 3 and \eqref{ab}, we know that
\begin{equation}\label{ht-c2-41}
\begin{aligned}
\frac{F^{11} h_{111}^2}{h_{11}^2}\leq \frac{F^{11} \tau_1^2}{(\tau-a)^2} +\frac{a F^{ii} h_{ii}^2}{8(\tau-a)}
\end{aligned}
\end{equation}
and
$$|h_{ii}|\leq \beta h_{11}, \quad \forall i\geq 2.$$
Thus
$$\frac{1-\beta}{h_{11}-h_{ii}}\leq\frac{1}{h_{11}}\leq \frac{1+\beta}{h_{11}-h_{ii}}.$$
Combined with Proposition \ref{th-lem-07}, we obtain
\begin{eqnarray}\label{ht-c2-42}
\nonumber\sum_{i\geq 2} \frac{F^{ii}h_{11i}^2}{h_{11}^2}
&=&\sum_{i\geq 2} \frac{F^{ii}-F^{11}}{h_{11}^2} h_{11i}^2 +\sum_{i\geq 2} \frac{F^{11}h_{11i}^2}{h_{11}^2}\\
\nonumber&\leq& \frac{1+\beta}{h_{11}} \sum_{i\geq 2} \frac{F^{ii}-F^{11}}{h_{11}-h_{ii}} h_{11i}^2+\sum_{i\geq 2} \frac{F^{11}h_{11i}^2}{h_{11}^2}\\
\nonumber&=&\frac{1+\beta}{h_{11}} \sum_{i\geq 2} \frac{G^{11}-G^{ii}}{\eta_{ii}-\eta_{11}} h_{11i}^2+\sum_{i\geq 2} \frac{F^{11}h_{11i}^2}{h_{11}^2}\\
&=&-\frac{1+\beta}{h_{11}} \sum_{i\geq 2}G^{1i, i1} h_{11i}^2 +\sum_{i\geq 2} \frac{F^{11}h_{11i}^2}{h_{11}^2}.
\end{eqnarray}
Use  \eqref{ht-c2-01}, \eqref{ab}, Cauchy-Schwarz inequality and the fact $\tau_i=h_{ii}\langle V, E_i \rangle$,
\begin{eqnarray}\label{ht-c2-43}
\nonumber\sum_{i\geq 2} \frac{F^{11}h_{11i}^2}{h_{11}^2}&\leq& 2\sum_{i\geq 2} \frac{F^{11}\tau_i^2}{(\tau-a)^2}+2A^2\sum_{i\geq 2} F^{11}  \langle V, E_i \rangle^2\\
\nonumber&\leq& 2\frac{C_0}{a^2} \sum_{i\geq 2} \frac{aF^{11}h_{ii}^2}{\tau-a}+2n C_0A^2 F^{11}  \\
&\leq& \beta^2 \frac{2nC_0}{a^2}  \frac{aF^{11}h_{11}^2}{\tau-a}+
\frac{a}{16 (\tau-a)} F^{11}h_{11}^2.
\end{eqnarray}
By Cauchy-Schwarz inequality and Codazzi formula, we have
\begin{eqnarray}\label{CSC}
  -\frac{2}{h_{11}} \sum_{i\geq2}G^{1i,i1}h_{1i1}^2&=& -\frac{2}{h_{11}} \sum_{i\geq2}G^{1i,i1}(h_{11i}
  +\overline{R}_{01i1})^2  \\
 \nonumber &\geq& -\frac{2}{h_{11}} \sum_{i\geq2}G^{1i,i1}\left(\frac{3}{4}h_{11i}^2-3\overline{R}_{01i1}^2\right).
\end{eqnarray}
When we choose $\beta$ sufficiently small such that $ \beta\leq \min \left\{\sqrt{\frac{a^2}{32n C_0}}, \frac{1}{2}\right\}$, by \eqref{ht-c2-42},  \eqref{ht-c2-43}, \eqref{CSC} and Proposition \ref{th-lem-07}, we have
\begin{eqnarray}\label{ht-c2-44}
\nonumber\sum_{i\geq 2} \frac{F^{ii}h_{11i}^2}{h_{11}^2}
&\leq&-\frac{3}{2h_{11}} \sum_{i\geq 2}G^{1i, i1} h_{11i}^2 +\frac{a F^{11}h_{11}^2}{8 (\tau-a)}\\
\nonumber&\leq&-\frac{2}{h_{11}} \sum_{i\geq 2}G^{1i, i1} h_{1i1}^2+\frac{6}{h_{11}}\sum_{i\geq2}-G^{1i,i1}\overline{R}_{01i1}^2
 +\frac{a F^{11}h_{11}^2}{8 (\tau-a)}\\
\nonumber &\leq&-\frac{2}{h_{11}} \sum_{i\geq 2}G^{1i, i1} h_{1i1}^2+6C_0\sum_{i\geq2}\frac{G^{11}-G^{ii}}{h_{11}-h_{ii}}+\frac{a F^{11}h_{11}^2}{8 (\tau-a)}\\
 &\leq&-\frac{2}{h_{11}} \sum_{i\geq 2}G^{1i, i1} h_{1i1}^2+\frac{6C_0}{1-\beta}\sum_{i\geq2}(G^{11}-G^{ii})+\frac{a F^{11}h_{11}^2}{8 (\tau-a)},
\end{eqnarray}
if $h_{11}\geq B>1$. The last inequality comes from $\frac{1-\beta}{h_{11}-h_{ii}}\leq\frac{1}{h_{11}}<1$.
Note that
$$\sum_{i\geq2}(G^{11}-G^{ii})=nG^{11}-\sum_iG^{ii}\leq(n-1)\sum_iG^{ii}=\sum_iF^{ii}.$$
Then \eqref{ht-c2-44} gives that
\begin{eqnarray}\label{Gt}
\nonumber-\frac{2}{h_{11}} \sum_{i\geq 2}G^{1i, i1} h_{1i1}^2-\sum_{i\geq 2} \frac{F^{ii}h_{11i}^2}{h_{11}^2}
 &\geq&-\frac{6C_0}{1-\beta}\sum_{i\geq2}(G^{11}-G^{ii})-\frac{a F^{11}h_{11}^2}{8 (\tau-a)}\\
 &\geq&-12C_0\sum_iF^{ii}-\frac{a F^{11}h_{11}^2}{8 (\tau-a)}.
\end{eqnarray}
Substitute  \eqref{ht-c2-41}  and  \eqref{Gt} into \eqref{ht-c2-32}, then
\begin{eqnarray*}
\nonumber0&\geq& -\frac{F^{11}\tau_1^2}{(\tau-a)^2}-\frac{aF^{ii}h_{ii}^2}{8(\tau-a)}-12C_0\sum_iF^{ii}
-\frac{aF^{11}h_{11}^2}{8(\tau-a)}+\frac{aF^{ii}h_{ii}^2}{2(\tau-a)}\\
\nonumber&&+\frac{F^{ii}\tau_i^2}{(\tau-a)^2}+\frac{A\lambda'-C_0}{2}\sum_iF^{ii}-C_0(1+\sum_iF^{ii})-AC_0\\
\nonumber&\geq&\frac{a F^{ii}h_{ii}^2}{4(\tau-a)}+ \frac{A\lambda'-27C_0}{2} \sum_i F^{ii} -C_0(A+1)\\
&\geq&\frac{C_0}{2}h_{11}-C_0(A+1),
\end{eqnarray*}
which implies that
$$h_{11}\leq 2(A+1).$$
\end{proof}

\section{The proof of  Theorem  \ref{Main}}
In this section, we use the degree theory for nonlinear elliptic
equation developed in \cite{Li89} to prove Theorem \ref{Main}. The
proof here is similar to those in \cite{Al, Jin, Li-Sh}. So, only sketch will
be given below.

Based on a  priori estimates in Theorem \ref{n-2-C^0},
Theorem \ref{n-2-C1e} and Theorem \ref{n-2-C2e}, we know that the
equation \eqref{Eq2} is uniformly elliptic. From \cite{Eva82},
\cite{Kry83} and Schauder estimates, we have
\begin{eqnarray}\label{C2+}
|r|_{C^{4,\alpha}(M)}\leq C
\end{eqnarray}
for any $(\eta,k)$-convex solution $\Sigma$ to the equation \eqref{Eq2}, where
the position vector of $\Sigma$ is $V=(r(u), u)$ for $u \in M$.
We define
\begin{eqnarray*}
C_{0}^{4,\alpha}(M)=\{r \in
C^{4,\alpha}(M): \Sigma \ \mbox{is}
 \ (\eta,k)-\mbox{convex}\}.
\end{eqnarray*}
Let us consider $$F(.; t): C_{0}^{4,\alpha}(M)\rightarrow
C^{2,\alpha}(M),$$ which is defined by
\begin{eqnarray*}
F(r, u; t)=\frac{\sigma_k(\mu(\eta))}{\sigma_{l}(\mu(\eta))} -tf(r,u, \nu)-(1-t)\phi(r) \frac{C_n^k}{C_n^l}((n-1)\zeta(r))^{k-l}.
\end{eqnarray*}
Let $$\mathcal{O}_R=\{r \in C_{0}^{4,\alpha}(M):
|r|_{C^{4,\alpha}(M)}<R\},$$ which clearly is an open
set of $C_{0}^{4,\alpha}(M)$. Moreover, if $R$ is
sufficiently large, $F(r, u; t)=0$ has no solution on $\partial
\mathcal{O}_R$ by a priori estimate established in \eqref{C2+}.
Therefore the degree $\deg(F(.; t), \mathcal{O}_R, 0)$ is
well-defined for $0\leq t\leq 1$. Using the homotopic invariance of
the degree, we have
\begin{eqnarray*}
\deg(F(.; 1), \mathcal{O}_R, 0)=\deg(F(.; 0), \mathcal{O}_R, 0).
\end{eqnarray*}
Proposition \ref{Uni} shows that $r_0=1$ is the unique
solution to the above equation for $t=0$. Direct calculation shows
that
\begin{eqnarray*}
F(sr_0; 0)= (1-\phi(sr_0)) \frac{C_n^k}{C_n^l}((n-1)\zeta(r))^{k-l}(sr_0).
\end{eqnarray*}
Then
\begin{eqnarray*}
\delta_{r_0}F(r_0, u; 0)=\frac{d}{d s}|_{s=1}F(sr_0, u;
0)=-\phi'(r_0) \frac{C_n^k}{C_n^l}((n-1)\zeta(r))^{k-l}(r_0)>0,
\end{eqnarray*}
where $\delta F(r_0, u; 0)$ is the linearized operator of $F$ at
$r_0$. Clearly, $\delta F(r_0, u; 0)$ takes the form
\begin{eqnarray*}
\delta_{w}F(r_0, u; 0)=-a^{ij}w_{ij}+b^i
w_i-\phi'(r_0) \frac{C_n^k}{C_n^l}((n-1)\zeta(r))^{k-l}(r_0),
\end{eqnarray*}
where $a^{ij}$ is a positive definite matrix. Since
$-\phi'(r_0) \frac{C_n^k}{C_n^l}((n-1)\zeta(r))^{k-l}(r_0)>0,$
thus $\delta_{r_0} F(r_0, u; 0)$ is an invertible operator. Therefore,
\begin{eqnarray*}
\deg(F(.; 1), \mathcal{O}_R; 0)=\deg(F(.; 0), \mathcal{O}_R, 0)=\pm
1.
\end{eqnarray*}
So, we obtain a solution at $t=1$. This completes the proof of
Theorem \ref{Main}.


\bigskip

\bigskip


\begin{thebibliography}{99}



\bibitem{Al}
F.  Andrade, J. Barbosa and J.  de Lira,
\emph{Closed Weingarten hypersurfaces in warped product manifolds},
Indiana Univ. Math. J., \textbf{58} (2009), 1691-1718.



\bibitem{Ba-Li}
J.  Barbosa, J.  de Lira and V.  Oliker,
\emph{A priori estimates for starshaped compact hypersurfaces with prescribed mth curvature function in space forms},
 Nonlinear problems in mathematical physics and related topics I, Int. Math. Ser.(N. Y.), \textbf{1} (2002), 35-52.


\bibitem{Ca1}
L. Caffarelli, L. Nirenberg and J. Spruck,
\emph{Dirichlet problem for nonlinear second order elliptic equations. I. Monge-Amp\`ere equation},
Comm. Pure Appl. Math., \textbf{37} (1984), 369-402.




\bibitem{Ca}
L. Caffarelli, L. Nirenberg and J. Spruck,
\emph{Nonlinear second order elliptic equations, IV. Starshaped compact Weingarten hypersurfaces},
 Current Topics in PDEs, (1986), 1-26.

\bibitem{CQ1} C. Chen,
\emph{On the elementary symmetric functions}, Preprint.




 \bibitem{CJ}
J. Chu and H. Jiao,
\emph{Curvature estimates for a class of Hessian type equations},
	arXiv:2004.05463.

\bibitem{CLW}
 D. Chen, H. Li and Z. Wang,
 \emph{Starshaped compact hypersurfaces with prescirbed Weingarten curvature in warped product manifolds},
 arXiv:1705.00313.


\bibitem{CTX}
X.  Chen, Q.  Tu and N.  Xiang,
\emph{A class of Hessian quotient equations in Euclidean space},
J. Diff. Equa., \textbf{269} (2020), 11172-11194.

\bibitem{Eva82}
L.  Evans,
\emph{Classical solutions of fully nonlinear, convex, second-order elliptic equations},
Comm. Pure Appl. Math., \textbf{35} (1982), 333-363.










\bibitem{Guan02}
B. Guan and P. Guan,
 \emph{Convex hypersurfaces of prescribed curvatures},
 Ann. of Math.,  \textbf{156} (2002),  655-673.

\bibitem{Guan15}
P. Guan and J. Li,
\emph{A mean curvature type flow in space forms},
 Int. Math. Res. Not,     \textbf{13} (2015), 4716-4740.

\bibitem{Guan12}
P. Guan, J. Li and Y.  Li,
\emph{Hypersurfaces of Prescribed Curvature Measure},
Duke Math. J., \textbf{161} (2012), 1927-1942.

\bibitem{Guan09}
P. Guan, C. Lin and X. Ma,
\emph{The Existence of Convex Body with Prescribed Curvature Measures},
Int. Math. Res. Not., (2009), 1947-1975.

\bibitem{Guan-Ren15}
P. Guan, C. Ren and Z. Wang,
\emph{Global $C^2$ estimates for convex solutions of curvature equations},
 Comm. Pure Appl. Math.,  \textbf{68} (2015), 1287-1325.




\bibitem{HLW} Y. Hu, H. Li and Y. Wei,
\emph{Locally constrained curvature flows and geometric inequalities
in hyperbolic space},  arXiv:2002.10643.



\bibitem{HL2}
F. Harvey, H. Lawson,
\emph{p-convexity, p-plurisubharmonicity and the Levi
problem}, Indiana Univ. Math. J.,  \textbf{62} (2013), 149-169.

\bibitem{Iv1}
N. Ivochkina,
\emph{Solution of the Dirichlet problem for curvature equations of order $m$},
 Mathematics of the USSR- Sbornik,  \textbf{67} (1990), 317-339.

\bibitem{Iv2}
N. Ivochkina,
\emph{The Dirichlet problem for the equations of curvature of order $m$},
Leningrad Math. J.,  \textbf{2} (1991), 631-654.


\bibitem{Jin}
Q. Jin and Y.  Li,
\emph{Starshaped compact hypersurfaces with prescribed $k$-th mean curvature in hyperbolic space},
Discrete Contin. Dyn. Syst., \textbf{15} (2006),  367-377.


\bibitem{Kry83}
N.  Krylov,
\emph{Boundedly inhomogeneous elliptic and parabolic equations in a domain},
Izv. Akad. Nauk SSSR Ser. Mat.,
\textbf{47} (1983), 75-108.



\bibitem{Li-Sh}
Q.  Li and W.  Sheng,
\emph{Closed hypersurfaces with prescribed Weingarten curvature in Riemannian manifolds},
 Calc. Var. Partial Differential Equations, \textbf{48} (2013),  41-66.



\bibitem{Li89}
Y.  Li,
\emph{Degree theory for second order nonlinear elliptic operators and its applications},
Comm. Partial Differential Equations, \textbf{14} (1989), 1541-1578.

\bibitem{Li-Ol}
Y.  Li and V.  Oliker,
\emph{Starshaped compact hypersurfaces with prescribed $m$-th mean curvature in elliptic space},
J. Partial Differential Equations, \textbf{15} (2002),  68-80.







\bibitem{Ren}
C. Ren and Z. Wang,
\emph{On the curvature estimates for Hessian equations},
 Amer. J. Math., \textbf{141} (2019), 1281-1315.

\bibitem{Ren1}
 C. Ren and Z. Wang,
 \emph{The global curvature estimate for the $n-2$ Hessian equation},
preprint, arXiv: 2002.08702.



\bibitem{Sha1}
J.  Sha,
\emph{p-convex Riemannian manifolds}, Invent. Math.,  \textbf{83} (1986), 437-447.

\bibitem{Sha2}
J.  Sha,
\emph{Handlebodies and p-convexity}, J. Differential Geom.,  \textbf{25} (1987),  353-361.




\bibitem{Sp}
J. Spruck and L. Xiao,
\emph{A note on starshaped compact hypersurfaces with prescribed scalar curvature in space form},
Rev. Mat. Iberoam. \textbf{33} (2017), 547-554.








\bibitem{Wu}
H. Wu,
\emph{Manifolds of partially positive curvature}, Indiana Univ. Math. J., \textbf{36} (1987), 525-548.



\end{thebibliography}
\end{document}